\journal{Applied Mathematics and Computation}
\newtheorem{theorem}{Theorem}
\newtheorem{remark}{Remark}
\newtheorem{example}{Example}
\newtheorem{lemma}{Lemma}
\newtheorem*{proof}{Proof}
\newtheorem{definition}{Definition}
\begin{document}
	
	\begin{frontmatter}
		
		\title{Stability analysis of Runge-Kutta methods for nonlinear delay-integro-differential-algebraic equations}
		\author[a]{Gehao Wang}	
		\ead{xtuwanggh@smail.xtu.edu.cn}
		\author[a]{Yuexin Yu \corref{cor1}}
		\ead{yuyx@xtu.edu.cn}	
		\affiliation[a]{organization={Hunan Key Laboratory for Computation and Simulation in Science and Engineering, School of Mathematics and Computational Science},
		addressline={Xiangtan University},
		city={Xiangtan},
		postcode={411105},
		state={Hunan},
		country={PR China}}
		\cortext[cor1]{Corresponding author}
		\begin{abstract}
			This paper is devoted to examining the stability of Runge-Kutta methods for solving nonlinear delay-integro-differential-algebraic equations  (DIDAEs). The stability of exact solution for nonlinear DIDAEs is obtained by using the Halanay's inequality. Hybrid numerical schemes combining Runge-Kutta methods and compound quadrature rules are analyzed for nonlinear DIDAEs. Criteria for ensuring the global and asymptotic stability of the proposed schemes are established. Several numerical examples are provided to validate the theoretical findings. 
		\end{abstract}

		\begin{keyword}
			
			Stability \sep delay-integro-differential-algebraic equations \sep Runge-Kutta methods \sep  compound quadrature.
		\end{keyword}
		
	\end{frontmatter}
		
		
		\section{Introduction}
		In scientific and engineering computations, mathematical models of many real-world problems involve not only delay effects, but also integral operators and algebraic constraints. These equations are collectively known as DIDAEs. DIDAEs are widely applied across multiple practical domains, including biomathematics, control theory, electric power systems, fluid dynamics, and constrained mechanical systems \cite{Niculescu2003,Brunner2004,Kolmanovskii2013}. For instance, in power system simulations, network topology and electromagnetic dynamics are often formulated as DIDAEs. Similarly, in biomathematics, population dynamics models incorporating memory effects can also be expressed in this form.
		
		Runge-Kutta methods, with their core advantages of high accuracy, strong stability, adaptability, and broad applicability, are among the preferred numerical tools for solving differential equations in science and engineering \cite{1,2}. The fundamental properties of DIDAEs are derived from both delay-integro-differential equations (DIDEs) and delay-differential-algebraic equations (DDAEs), and their research development relies on the established theoretical foundations of these two classes of equations. 
		
		Researchers have explored various approaches to analyzing DIDEs. Nonlinear stability analysis of neutral DIDEs was conducted by Wang using one-leg methods \cite{Wang2014} and Runge-Kutta methods \cite{Wang2011}, while Zhao employed block boundary value methods \cite{Zhao2013} and symmetric Runge-Kutta methods  \cite{Zhao2017} to study their stability properties. Yu \cite{Yu2025} further established stability criteria for neutral DIDEs discretized using general linear methods.  Kucche \cite{Kucche2019} investigated Ulam-type stability for nonlinear DIDEs, and Amirali \cite{Amirali2023} proposed a novel approach for establishing stability inequalities for high-order DIDEs. Nowak \cite{Nowak2023} analyzed the asymptotic stability of nonlinear neutral DIDEs, while Mahmoud \cite{Mahmoud2024} employed Lyapunov functionals to obtain asymptotic stability results for third-order nonlinear neutral DIDEs. Additionally, numerical techniques such as Runge-Kutta methods \cite{Yuan2013}, general linear methods \cite{HU2011} and block boundary value methods \cite{Zhao2014} have found extensive application in the stability analysis of DIDEs. 
		
		Research on DDAEs has also seen significant progress in both stability and asymptotic stability analyses. Earlier, Tian \cite{Tian2011} conducted early work on the asymptotic stability of general linear methods for DDAEs. Further contributions by Li \cite{LI2011} and Tian \cite{Tian2014}  examined the stability of Runge-Kutta methods for neutral DDAEs. Zhang studied the asymptotic stability \cite{Zhang2010BBV} and stability \cite{Zhang2021} of DDAEs using the block boundary value methods. Thuan \cite{Thuan2024} investigated the stability of stochastic DDAEs using Lyapunov functions and comparison principle, while Sawoor \cite{Sawoor2020} examined Lyapunov-based stability of linear DDAEs. Other numerical schemes, such as general linear methods \cite{Yu2015} and implicit Euler method \cite{Sun2014}, have also demonstrated unique advantages in analyzing the stability of DDAEs. 
		
		Compared with DDAEs and DIDEs, the stability analysis of DIDAEs is significantly more complex. Yuan \cite{Yuan2014} performed a stability analysis of two-step Runge-Kutta methods for linear neutral DIDAEs. Subsequently, Liu and Li \cite{liu2022} extended this work to a more general class of functional differential-algebraic equations (FDAEs), and systematically explored the asymptotic stability properties of Runge-Kutta methods. 
		
		Despite these advances, few results have been reported on the stability of numerical methods for nonlinear DIDAEs. To the best of our knowledge, only Yan \cite{Yan2020} has investigated the  stability of block boundary value methods for nonlinear non-stiff DIDAEs. However, stiffness, combined with delays and algebraic constraints, significantly complicates the development of reliable numerical methods. Without rigorous stability analysis, numerical solutions may exhibit spurious oscillations, instability, or divergence—thereby undermining their practical utility. Therefore, there is a strong need to develop and analyze numerical methods that are both stable and capable of addressing the complex structure of nonlinear DIDAEs. This paper aims to address these challenges by investigating the stability properties of both exact and numerical solutions for such systems.
		
		The remainder of this paper is organized as follows: Section 2 examines the stability and asymptotic stability of system using Halanay’s inequality. In Section 3, we investigate Runge-Kutta methods with compound quadrature rules, which provide a novel framework for DIDAEs analysis. Section 4 introduces several stability concepts and supporting lemmas essential for establishing the theoretical results. Section 5 presents the main results, elaborating on the criteria for Runge-Kutta methods with compound quadrature rules to achieve global and asymptotic stability.	Finally, Section 6 provides illustrative examples to demonstrate the practical applicability of the proposed methods.
		\section{DIDAEs and stability properties of the exact solution}
		This part introduces DIDAEs and the essential features of global and asymptotic stability behavior exhibited by exact solution.
		
		The symbols $\langle\cdot,\cdot\rangle$ and $\|\cdot\|$ represent a specified inner product and its associated norm in the complex space $\mathbb{C}^{N}$. It is notable that $N$ may be any positive integer.
		
		Consider the subsequent system of nonlinear DIDAEs with a constant delay $\tau > 0$ :
		\begin{equation}\label{2.1}
			\begin{cases}
				&u'(t)=f(t,\,u(t),\,\int_{t-\tau}^{t}K_{1}\left(t,\,\theta,\,u(\theta),\,v(\theta)\right)d\theta),\ t_{0}\leq t,\\&v(t)=g(t,\,u(t),\,\int_{t-\tau}^{t}K_{2}(t,\,\theta,\,u(\theta),\,v(\theta))d\theta),\  t_{0}\leq t,\\&u(t)=\psi(t), \ v(t)=\varphi(t),\  t_{0}-\tau\leq t\leq t_{0},\end{cases}
		\end{equation}
		where   $f:[t_0,+\infty)\times\mathbb{C}^{N_1}\times\mathbb{C}^{N_3}\to\mathbb{C}^{N_1},\ g:[t_0,+\infty)\times\mathbb{C}^{N_1}\times\mathbb{C}^{N_4}\to\ \mathbb{C}^{N_2},\  K_1:[t_0,+\infty)\times[t_0-\tau,+\infty)\times\mathbb{C}^{N_1}\times\mathbb{C}^{N_2}\to\mathbb{C}^{N_3}\  \text{and }\ K_2:[t_0,+\infty)\times[t_0-\tau,+\infty)\times\mathbb{C}^{N_1}\times\mathbb{C}^{N_2}\to\mathbb{C}^{N_4}$ are defined as functions with adequate smoothness, and $N_i\,(i=1,2,3,4)$ are positive integers.

		In order to disscuss the stability of DIDAEs (\ref{2.1}), we introduce another system with different initial condition:
		\begin{equation}\label{2.8}
			\begin{cases}&\tilde{u}'(t)=f(t,\,\tilde{u}(t),\,\int_{t-\tau}^{t}K_{1}(t,\,\theta,\,\tilde{u}(\theta),\tilde{v}(\theta))d\theta),\ t_{0}\leq t,\\&\tilde{v}(t)=g(t,\,\tilde{u}(t),\,\int_{t-\tau}^{t}K_{2}(t,\,\theta,\,\tilde{u}(\theta),\,\tilde{v}(\theta))d\theta),\ t_{0}\leq t,\\&\tilde{u}(t)=\tilde\psi(t),\  \tilde{v}(t)=\tilde\varphi(t),\  t_{0}-\tau\leq t\leq t_{0}.\end{cases}
		\end{equation}
		
		We hypothesize that the equations (\ref{2.1}) and (\ref{2.8}) fulfill the subsequent Lipschitz conditions with respective constants $\alpha$ and 
		$L_i>0,\ 1\leq i\leq 7\ $for all $t\in [t_0,+\infty),\ \theta\in [t_0-\tau,+\infty),\ u_1,\,\hat{u}_1,\,\tilde{u}_1,\,y_1,\, y_2\in\mathbb{C}^{N_1},\ \hat{v}_1,\,\tilde{v}_1\in\mathbb{C}^{N_2},\ \hat{p}_1,\,\tilde{p}_1,\,z\in\mathbb{C}^{N_3},\ \hat{q}_1,\, \tilde{q}_1\in\mathbb{C}^{N_4}$
		\begin{align}
			&\label{2.2}\|f(t,\,u_{1},\,\hat{p}_{1})-f(t,\,u_{1},\,\tilde{p}_{1})\|\leq L_{1}\left\|\hat{p}_{1}-\tilde{p}_{1}\right\|,\\
			\label{2.3}&\|g(t,\,\hat{u}_{1},\,\hat{q}_{1})-g(t,\,\tilde{u}_{1},\,\tilde{q}_{1})\|\leq L_{2}\left\|\hat{u}_{1}-\tilde{u}_{1}\right\|+L_{3}\left\|\hat{q}_{1}-\tilde{q}_{1}\right\|,\\
			\label{2.4}&\|K_{1}(t,\,\theta,\,\hat{u}_{1},\,\hat{v}_{1})-K_{1}(t,\,\theta,\,\tilde{u}_{1},\,\tilde{v}_{1})\|\leq L_{4}\left\|\hat{u}_{1}-\tilde{u}_{1}\right\|+L_{5}\left\|\hat{v}-\tilde{v}_{1}\right\|,\\
			\label{2.5}&\|K_{2}(t,\,\theta,\,\hat{u}_{1},\,\hat{v}_{1})-K_{2}(t,\,\theta,\,\tilde{u}_{1},\,\tilde{v}_{1})\|\leq L_{6}\left\|\hat{u}_{1}-\tilde{u}_{1}\right\|+L_{7}\left\|\hat{v}_{1}-\tilde{v}_{1}\right\|,\\
			\label{2.6}
			&\Re\langle y_1-y_2,\,f(t,\,y_1,\,z)-f(t,\,y_2,\,z)\rangle\leqslant\alpha\|y_1-y_2\|^2,
		\end{align}
		in which $(-\alpha) $ is given and nonnegative.
		
		 Moreover, the initial  functions for the problem (\ref{2.1}) $$\psi:[t_0-\tau,t_0]\to\mathbb{C}^{N_1},\ \varphi:[t_0-\tau,t_0]\to\mathbb{C}^{N_2},$$ and initial function for the perturbation problem (\ref{2.8}) $$\tilde{\psi}:[t_0-\tau,t_0]\to\mathbb{C}^{N_1},\ \tilde{\varphi}:[t_0-\tau,t_0]\to\mathbb{C}^{N_2}$$ are sufficiently smooth and meet the required consistency conditions
		\begin{equation}
			\begin{cases}\label{2.7}
				&\psi(t_0)=f(t_0,\,\psi(t_0),\,\int_{t_0-\tau}^{t_0}K_1(t_0,\,\theta,\,\psi(\theta),\,\varphi(\theta))d\theta),\\
				&\varphi(t_0)=g(t_0,\,\psi(t_0),\,\int_{t_0-\tau}^{t_0}K_2(t_0,\,\theta,\,\psi(\theta),\,\varphi(\theta))d\theta),\\
			\end{cases}
		\end{equation}
		and 
		\begin{equation}
			\begin{cases}
				&\tilde{\psi}(t_0)=f(t_0,\,\tilde{\psi}(t_0),\,\int_{t_0-\tau}^{t_0}K_1(t_0,\,\theta,\,\tilde{\psi}(\theta),\,\tilde{\varphi}(\theta))d\theta),\\
				&\tilde{\varphi}(t_0)=g(t_0,\,\tilde{\psi}(t_0),\,\int_{t_0-\tau}^{t_0}K_2(t_0,\,\theta,\,\tilde{\psi}(\theta),\,\tilde{\varphi}(\theta))d\theta).\\
			\end{cases}
		\end{equation}
		\begin{remark}
			In this setting, $\alpha $ functions as the one-sided  Lipschitz constant, while each $L_i\, (1\leq i\leq 7)$ acts as the classical Lipschitz constants. A prevailing assumption is that $L_i$ does not attain notably large positive values. Importantly, we permit massive number for the classical Lipschitz constant of $f(t,\,y,\,z)$ with respect to $y$; that is, the problems' stiffness is allowed to exist.
		\end{remark} 
		
		Before explaining our main results, we suppose that the problems (\ref{2.1}) and (\ref{2.8}) possess unique exact solutions, denoted by $\{u(t),\,v(t)\}$ and $\{\tilde{u}(t),\,\tilde{v}(t)\}$, respectively, and  we need the following  generalized Halanay's inequality.
		\begin{lemma}[\cite{Wanghalanay}]\label{hlemma3.1}
		 Consider inequalities
		\begin{align}
			y'(t) &\leq -A y(t) + B \max_{\xi \in [t-\tau,t]} y(\xi) + C \max_{\xi \in [t-\tau,t]} z(\xi),\quad t \geq t_0, \label{lemma2.9}\\
			z(t) &\leq G \max_{\xi \in [t-\tau,t]} y(\xi) + H \max_{\xi \in [t-\tau,t]} z(\xi),\quad t \geq t_0, \label{lemma2.10}
		\end{align}
		where \( t_0 \) is a constant. If \(A>0,\) \(B,\,C,\,G,\,H\geq 0 \) and \( H < 1 \), then for every \( \epsilon>0 \), there exist \( \delta(\epsilon)\to \delta_+ < 0, \epsilon \to 0_+ \), such that
		\begin{align}
			y(t) &\leq (1+\epsilon) \max_{\xi\in[t_0-\tau,t_0]} y(\xi)e^{\delta(\epsilon)(t - t_0)}, \quad t \geq t_0, \label{lemma2.11}\\
			z(t) &\leq (1+\epsilon) \max_{\xi\in[t_0-\tau,t_0]} z(\xi)e^{\delta(\epsilon)(t - t_0)}, \quad t \geq t_0 \label{lemma2.12}
		\end{align}
		for every nonnegative solution \( (y, z):[t_0-\tau, +\infty)\to \mathbb{R}^2_+ \) of the inequality \eqref{lemma2.9}--\eqref{lemma2.10} if and only if 
		\[
		-A + B + \frac{CG}{1 - H} < 0. \label{lemma2.13}
		\]
		\end{lemma}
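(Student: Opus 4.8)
The plan is to prove the two implications of the equivalence separately, concentrating the effort on the sufficiency direction. Write $M_y=\max_{\xi\in[t_0-\tau,t_0]}y(\xi)$ and $M_z=\max_{\xi\in[t_0-\tau,t_0]}z(\xi)$, and seek the decay exponent among the negative zeros of the characteristic function
\[
\Phi(\lambda)=-\lambda-A+Be^{-\lambda\tau}+\frac{CGe^{-2\lambda\tau}}{1-He^{-\lambda\tau}},
\]
obtained by inserting the ansatz $y(t)\sim e^{\lambda t}$, $z(t)\sim e^{\lambda t}$ into \eqref{lemma2.9}--\eqref{lemma2.10}, using $\max_{\xi\in[t-\tau,t]}e^{\lambda\xi}=e^{\lambda(t-\tau)}$ for $\lambda<0$, and eliminating the $z$-amplitude through the algebraic relation issued by \eqref{lemma2.10}. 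The first observation is that $\Phi(0)=-A+B+\frac{CG}{1-H}$, so the hypothesis is exactly $\Phi(0)<0$. In the generic case $CG>0$, $H>0$ the last term has a pole at $\lambda^\ast:=\tfrac{\ln H}{\tau}<0$, and $\Phi$ is continuous on $(\lambda^\ast,0]$ with $\Phi(\lambda)\to+\infty$ as $\lambda\downarrow\lambda^\ast$; the intermediate value theorem then furnishes a largest root $\delta_+\in(\lambda^\ast,0)$, with $\Phi<0$ on $(\delta_+,0]$ (the degenerate cases $CG=0$ or $H=0$ decouple the two inequalities and are handled by the same reasoning over the appropriate $\lambda$-range). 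For each $\epsilon>0$ I would select $\delta(\epsilon)\in(\delta_+,0)$ with $\delta(\epsilon)\to\delta_+$ as $\epsilon\to0_+$, so that the strict sign $\Phi(\delta(\epsilon))<0$ supplies a quantitative margin.

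Next I would remove the exponential by the substitution $\tilde y(t)=y(t)e^{-\delta(t-t_0)}$, $\tilde z(t)=z(t)e^{-\delta(t-t_0)}$. Dividing \eqref{lemma2.9}--\eqref{lemma2.10} by $e^{\delta(t-t_0)}$ and using $\max_{\xi\in[t-\tau,t]}\bigl(w(\xi)e^{\delta(\xi-t_0)}\bigr)\le e^{-\delta\tau}\max_{\xi\in[t-\tau,t]}w(\xi)$ recasts the system into the same shape for $(\tilde y,\tilde z)$ with transformed data $A+\delta,\,Be^{-\delta\tau},\,Ce^{-\delta\tau},\,Ge^{-\delta\tau},\,He^{-\delta\tau}$; here $\Phi(\delta)<0$ says precisely that $-(A+\delta)+Be^{-\delta\tau}+\frac{(Ce^{-\delta\tau})(Ge^{-\delta\tau})}{1-He^{-\delta\tau}}<0$, and $He^{-\delta\tau}<1$ because $\delta>\lambda^\ast$. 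The asserted bounds \eqref{lemma2.11}--\eqref{lemma2.12} are thereby reduced to the pure boundedness claims $\tilde y(t)\le(1+\epsilon)M_y$ and $\tilde z(t)\le(1+\epsilon)M_z$ for $t\ge t_0$, both of which already hold strictly on $[t_0-\tau,t_0]$ since $e^{-\delta(t-t_0)}\le1$ there.

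The central step is to decouple the system using $He^{-\delta\tau}<1$. From the transformed \eqref{lemma2.10} I would bound the running maximum of $\tilde z$ by $\tfrac{Ge^{-\delta\tau}}{1-He^{-\delta\tau}}$ times the running maximum of $\tilde y$, plus the initial contribution carried by $M_z$; substituting this into the transformed \eqref{lemma2.9} collapses the pair into a single scalar Halanay inequality for $\tilde y$ whose effective delayed coefficient is $Be^{-\delta\tau}+\tfrac{CGe^{-2\delta\tau}}{1-He^{-\delta\tau}}$, governed by the margin $\Phi(\delta)<0$. A classical first-crossing argument then gives $\tilde y(t)\le(1+\epsilon)M_y$: if the bound were first attained at some $t_1>t_0$, then $\tilde y'(t_1)\ge0$, whereas the scalar inequality together with $\Phi(\delta(\epsilon))<0$ forces $\tilde y'(t_1)<0$, a contradiction. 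Back-substituting the $\tilde y$ bound into the transformed \eqref{lemma2.10} recovers $\tilde z(t)\le(1+\epsilon)M_z$. I expect the delicate point to be exactly this elimination of $z$: the two maximum operators and the algebraic feedback of $z$ into the differential inequality for $y$ must be controlled simultaneously while carrying along the initial data, and it is precisely the assumption $H<1$ that keeps $1-He^{-\delta\tau}$ positive and lets the entire coupling be funneled through the single quantity $\Phi$.

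Finally, for the necessity direction I would argue by contraposition. If $-A+B+\frac{CG}{1-H}\ge0$, then for any $y^\ast>0$ the constant pair $y(t)\equiv y^\ast$, $z(t)\equiv z^\ast:=\tfrac{G}{1-H}y^\ast$ satisfies \eqref{lemma2.9} (since $0\le y^\ast(-A+B+\tfrac{CG}{1-H})$) and \eqref{lemma2.10} (with equality), and is a nonnegative solution that does not decay; it therefore admits no majorant of the form \eqref{lemma2.11}--\eqref{lemma2.12} with $\delta<0$, so uniform exponential decay must fail for some admissible solution. Combining this with the sufficiency direction establishes the stated equivalence.
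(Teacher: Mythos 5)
You should first note a structural point: the paper never proves this lemma at all --- it is imported verbatim from \cite{Wanghalanay} and used as a black box --- so your attempt can only be judged on its own merits. Much of your architecture is sound: the characteristic function $\Phi$ is the right one, the rescaling $\tilde y(t)=y(t)e^{-\delta(t-t_0)}$, $\tilde z(t)=z(t)e^{-\delta(t-t_0)}$ does reproduce the system with coefficients $\hat A=A+\delta$, $\hat B=Be^{-\delta\tau}$, $\hat C=Ce^{-\delta\tau}$, $\hat G=Ge^{-\delta\tau}$, $\hat H=He^{-\delta\tau}$, the sign condition $\Phi(\delta)<0$ is exactly $-\hat A+\hat B+\hat C\hat G/(1-\hat H)<0$ with $\hat H<1$, and the constant-solution argument for necessity is complete and correct. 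The genuine gap sits exactly where you flagged it: the elimination of $z$. Writing $M_y=\max_{\xi\in[t_0-\tau,t_0]}y(\xi)$, $M_z=\max_{\xi\in[t_0-\tau,t_0]}z(\xi)$ and $Y(t)=\sup_{[t_0-\tau,t]}\tilde y$, $Z(t)=\sup_{[t_0-\tau,t]}\tilde z$, the transformed \eqref{lemma2.10} only gives $Z(t)\le\max\bigl\{M_z,\ \tfrac{\hat G}{1-\hat H}Y(t)\bigr\}$, and the branch $M_z$ cannot be dropped. Your ``single scalar Halanay inequality'' keeps only the $Y$-branch, so at a putative first crossing $t_1$ of the level $(1+\epsilon)M_y$ you can only conclude $\tilde y'(t_1)\le(-\hat A+\hat B)(1+\epsilon)M_y+\hat C\max\bigl\{M_z,\tfrac{\hat G}{1-\hat H}(1+\epsilon)M_y\bigr\}$, which need not be negative when $M_z$ is large relative to $M_y$: the contradiction evaporates. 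The closing step is likewise a non sequitur: feeding $\tilde y\le(1+\epsilon)M_y$ back into the transformed \eqref{lemma2.10} yields a bound of order $\max\bigl\{M_z,\tfrac{\hat G}{1-\hat H}(1+\epsilon)M_y\bigr\}$, not $(1+\epsilon)M_z$.

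This gap is not repairable within the statement as transcribed, because in that form the conclusion is false, so no proof of it can exist. Take $\tau=1$, $t_0=0$, $A=1$, $B=0$, $C=1$, $G=\tfrac12$, $H=0$; all hypotheses hold and $-A+B+CG/(1-H)=-\tfrac12<0$. Let $y\equiv1$, $z\equiv0$ on $[-1,0]$, and for $t>0$ put $y(t)=e^{-t/2}$ and $z(t)=\tfrac12\max_{\xi\in[t-1,t]}y(\xi)$. A direct check shows that \eqref{lemma2.9}--\eqref{lemma2.10} hold for all $t\ge 0$ and the pair is nonnegative, yet $z(t)=\tfrac12$ on $(0,1]$ while the right-hand side of \eqref{lemma2.12} is identically zero, since $\max_{[-1,0]}z=0$; swapping the roles of the two components ($y\equiv 0$, $z\equiv 1$ initially) produces the analogous violation of \eqref{lemma2.11}. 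What your first-crossing scheme does prove --- provided you test $\tilde y$ against a level dominating both $(1+\epsilon)M_y$ and $(1+\epsilon)\tfrac{1-\hat H}{\hat G}M_z$, so that the $M_z$-branch of the maximum above is inactive --- is the corrected statement in which each of $y(t)e^{-\delta(t-t_0)}$ and $z(t)e^{-\delta(t-t_0)}$ is bounded by a constant multiple of $\max\{M_y,M_z\}$; that mixed-constant form is presumably what \cite{Wanghalanay} actually asserts, and it is what the stability analysis in this paper genuinely requires. As submitted, however, your proof establishes neither the literal claim (impossible) nor the corrected one, because the $M_z$ forcing term is never carried through the crossing argument.
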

		\begin{theorem}
			Suppose problems \eqref{2.1} and \eqref{2.8} satisfy conditions \eqref{2.2}--\eqref{2.6} with
			\begin{equation}
\alpha+L_1L_4\tau+\frac{L_1L_5\tau(L_2+L_3L_6\tau)}{1-L_3L_7\tau}<0,\quad L_3L_7\tau<1.
			\end{equation}
			Therefore, we obtain
			\begin{equation}\label{15}
				\begin{aligned}
					\|u(t) - \tilde{u}(t)\| &\leq  \mathcal{H}_1\max_{s \in [t_0 - \tau, t_0]} \|\psi(s) - \tilde{\psi}(s)\|, \\
					\|v(t) - \tilde{v}(t)\| &\leq \mathcal{H}_2\max_{s \in [t_0 - \tau, t_0]} \|\varphi(s) - \tilde{\varphi}(s)\|,
				\end{aligned}
			\end{equation}
			where $\mathcal{H}_i\,(i = 1, 2)$ are constants, and 
			\begin{equation}\label{16}
				\lim\limits_{t\rightarrow+\infty}\|u(t)-\tilde{u}(t)\|=0,\quad\lim\limits_{t\rightarrow+\infty}\|v(t)-\tilde{v}(t)\|=0.
			\end{equation}
		\end{theorem}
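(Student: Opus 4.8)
The plan is to collapse both error functions onto a single scalar pair $y(t)=\|u(t)-\tilde u(t)\|$ and $z(t)=\|v(t)-\tilde v(t)\|$, show that this pair satisfies precisely the two inequalities \eqref{lemma2.9}--\eqref{lemma2.10} of the Halanay-type Lemma~\ref{hlemma3.1}, and then read off both \eqref{15} and \eqref{16} from its exponential conclusions. The whole argument is a matter of arranging the constants so that the Halanay threshold coincides with the stated hypothesis.

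First I would handle the differential equation. Writing $U(t)=u(t)-\tilde u(t)$ and abbreviating $P(t)=\int_{t-\tau}^{t}K_1(t,\theta,u(\theta),v(\theta))\,d\theta$ with $\tilde P$ its perturbed analogue, I would split
\[
f(t,u(t),P(t))-f(t,\tilde u(t),\tilde P(t))=\bigl[f(t,u(t),P(t))-f(t,\tilde u(t),P(t))\bigr]+\bigl[f(t,\tilde u(t),P(t))-f(t,\tilde u(t),\tilde P(t))\bigr]
\]
and estimate $\tfrac{d}{dt}\|U(t)\|^{2}=2\Re\langle U(t),U'(t)\rangle$. The one-sided condition \eqref{2.6} bounds the first bracket by $\alpha\|U(t)\|^{2}$, which is exactly where the admissible stiffness is absorbed, while the classical bound \eqref{2.2} controls the second bracket by $L_1\|P(t)-\tilde P(t)\|$. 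Estimating the memory difference through \eqref{2.4} together with $\bigl\|\int_{t-\tau}^{t}(\cdot)\,d\theta\bigr\|\le\tau\max_{[t-\tau,t]}\|\cdot\|$, and then dividing by $\|U(t)\|$ (interpreting the left side as a Dini derivative at the zeros of $\|U(t)\|$), yields
\[
y'(t)\le\alpha\,y(t)+L_1L_4\tau\max_{\xi\in[t-\tau,t]}y(\xi)+L_1L_5\tau\max_{\xi\in[t-\tau,t]}z(\xi),
\]
i.e.\ \eqref{lemma2.9} with $A=-\alpha>0$, $B=L_1L_4\tau$, $C=L_1L_5\tau$ (here $\alpha<0$ is forced by the hypothesis, since the remaining terms are positive).

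Next I would treat the algebraic constraint. Applying \eqref{2.3} to $v(t)-\tilde v(t)$ and estimating $\|Q(t)-\tilde Q(t)\|$ via \eqref{2.5} and the same integral bound, where $Q(t)=\int_{t-\tau}^{t}K_2(t,\theta,u(\theta),v(\theta))\,d\theta$, gives
\[
z(t)\le L_2\,y(t)+L_3L_6\tau\max_{\xi\in[t-\tau,t]}y(\xi)+L_3L_7\tau\max_{\xi\in[t-\tau,t]}z(\xi).
\]
Dominating $y(t)$ by its maximum over $[t-\tau,t]$ brings this into the form \eqref{lemma2.10} with $G=L_2+L_3L_6\tau$ and $H=L_3L_7\tau$, and the assumption $L_3L_7\tau<1$ is precisely $H<1$. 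With these identifications the Halanay criterion $-A+B+\tfrac{CG}{1-H}<0$ reads $\alpha+L_1L_4\tau+\tfrac{L_1L_5\tau(L_2+L_3L_6\tau)}{1-L_3L_7\tau}<0$, which is exactly the first inequality in the hypothesis. Lemma~\ref{hlemma3.1} then supplies $\epsilon>0$ and $\delta(\epsilon)<0$ with $y(t)\le(1+\epsilon)\max_{[t_0-\tau,t_0]}y\,e^{\delta(\epsilon)(t-t_0)}$ and the companion bound for $z$. Since the initial segments of $U$ and $V$ coincide with $\psi-\tilde\psi$ and $\varphi-\tilde\varphi$, and $e^{\delta(\epsilon)(t-t_0)}\le1$ for $t\ge t_0$, choosing $\mathcal H_1=\mathcal H_2=1+\epsilon$ gives \eqref{15}, while letting $t\to+\infty$ and using $\delta(\epsilon)<0$ gives \eqref{16}.

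The only delicate point I anticipate is the passage from the quadratic estimate on $\tfrac{d}{dt}\|U(t)\|^{2}$ to a first-order inequality for $\|U(t)\|$ itself, since $\|U(t)\|$ need not be differentiable at its zeros; this is resolved by working with the upper right Dini derivative and noting the inequality holds trivially where $\|U(t)\|=0$. Everything else is routine bookkeeping to make the seven Lipschitz constants line up with the four constants $A,B,C,G$ (and $H$) of the lemma.
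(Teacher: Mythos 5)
Your proposal is correct and follows essentially the same route as the paper: derive the coupled differential--algebraic inequalities for $\|u(t)-\tilde u(t)\|$ and $\|v(t)-\tilde v(t)\|$ from conditions \eqref{2.2}--\eqref{2.6}, then apply the generalized Halanay inequality of Lemma~\ref{hlemma3.1} with $A=-\alpha$, $B=L_1L_4\tau$, $C=L_1L_5\tau$, $G=L_2+L_3L_6\tau$, $H=L_3L_7\tau$, whose threshold condition coincides with the stated hypothesis. In fact your write-up supplies details the paper leaves implicit (the splitting of the $f$-difference, the Dini-derivative issue at zeros of $\|U(t)\|$, and the explicit choice $\mathcal H_1=\mathcal H_2=1+\epsilon$), so no gap remains.
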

		\begin{proof}
		Define $U(t) = \|u(t) - \tilde{u}(t)\|$ and $V(t) = \|v(t) - \tilde{v}(t)\|$ for brevity.
		By conditions \eqref{2.2}--\eqref{2.6}, it is found that
		\begin{equation}\label{h3.4}
			U'(t)\leq \alpha U(t)+L_1L_4\tau\max\limits_{s \in [t-\tau,t]}U(s)+L_1L_5\tau\max\limits_{s \in [t-\tau,t]}V(s),
		\end{equation}
		and
		\begin{equation}\label{h3.5}
			\begin{aligned}
		V(t)\leq &L_2U(t)+L_3L_6\tau\max\limits_{s \in [t-\tau,t]}U(s)+L_3L_7\tau\max\limits_{s \in [t-\tau,t]}V(s)\\
		\leq&(L_2+L_3L_6\tau)\max\limits_{s \in [t-\tau,t]}U(s)+L_3L_7\tau\max\limits_{s \in [t-\tau,t]}V(s).
			\end{aligned}
		\end{equation}
		Based on Lemma \ref{hlemma3.1}, to prove the theorem, it is enough to derive \eqref{15} and \eqref{16} from \eqref{h3.4} and \eqref{h3.5}.
		\end{proof}

		\section{Runge–Kutta discretization}
		
		Regarding the nonlinear DIDAEs (\ref{2.1}), we initially revisit the $s$-stage fundamental Runge–Kutta method	
		\begin{equation}
			\begin{cases}\label{2.9}
				&y_i^{(n)}=y_n+h\sum\limits_{j=1}^sa_{ij}\,f(t_n+c_jh,\,y_j^{(n)}),\ i=1,2,\ldots,s,\\
				&y_{n+1}=y_n+h\sum\limits_{j=1}^sb_j\,f(t_n+c_jh,\,y_j^{(n)}),\ n\geq0.
			\end{cases}
		\end{equation}
		This numerical method is frequently utilized for ordinary differential equations (ODEs) structured as $y'(t)=f(t,\,y(t))$ where $t>t_0$, subject to the initial condition $y(t_0)=y_0$. Then, by adapting the method (\ref{2.9}) to the DIDAEs (\ref{2.1}), the following discretisation scheme is obtained:
		
		\begin{equation}\label{2.10}
			\begin{cases}	&u_i^{(n)}=u_{n}+h\sum\limits_{j=1}^{s}a_{ij}\,f(t_j^{(n)},\,u_j^{(n)},\,p_j^{(n)}),\  i=1,2,\cdots,s,\\&u_{n+1}=u_{n}+h\sum\limits_{j=1}^{s}b_{j}\,f(t_j^{(n)},\,u_j^{(n)},\,p_j^{(n)}),\  n=0,1,\cdots,\\&v_{n+1}=g(t_{n+1},\,u_{n+1},\,l_{n+1}),\  n=0,1,\cdots.
			\end{cases}
		\end{equation}
		Define the time step size as $h = \tau/m$, where $m$ being a prescribed positive integer. The usual assumptions are \( \sum\limits_{j=1}^{s} b_j = 1 \).  The discrete time can be defined as \(t_n = t_0 + n h \), as well as \( t_j^{(n)} \) expressed as \( t_n + c_j h \), where \( c_i \in [0,1],\,\forall\  i = 1,2,\dots,s \).  The arguments \( u_n,\ v_n \) approximate \( u(t_n),\  v(t_n) \), respectively. The argument $u_j^{(n)}$ represents  an approximation to $u(t_n+c_jh)$, and the parameter $p_j^{(n)}$ is an approximation to  $\int_{t_j^{(n-m)}}^{t_j^{(n)}}K_{1}(t_j^{(n)},\,\theta,\,u(\theta),\,v(\theta))d\theta$  derived from the compound quadrature formula (CQ formula)
		\begin{equation}\label{2.11}
			p_j^{(n)}=h\sum\limits_{q=0}^m\alpha_q\,K_1(t_j^{(n)},\,t_j^{(n-q)},\,u_j^{(n-q)},\,v_j^{(n-q)}),\  j=1,2,\ldots,s,
		\end{equation}
		where $v_j^{(n)}$ approximates $g(t_j^{(n)},\,u_j^{(n)},\,l_j^{(n)})$, in which $l_j^{(n)}$ is also obtained by CQ formula
		\begin{equation}\label{2.12}
			l_j^{(n)}=h\sum\limits_{q=0}^m\beta_q\,K_2(t_j^{(n)},\,t_j^{(n-q)},\,u_j^{(n-q)},\,v_j^{(n-q)}),\  j=1,2,\ldots,s,
		\end{equation}
		with weights $\{\alpha_q\}$ and $\{\beta_q\}$ that are not dependent on the variable $m$. 
		\( l_n \)   is an approximation of the integral \( \int_{t_{n-m}}^{t_n} K_2(t_n,\, \theta,\, u(\theta),\, v(\theta)) \, d\theta \) and is computed by  CQ formula
		\begin{equation}\label{l_n}
		l_n=h\sum\limits_{q=0}^m\gamma_q\,K_2(t_n,\,t_{n-q},\,u_{n-q},\,v_{n-q}),
		\end{equation}
		where  $\{\gamma_q\}$ are not dependent on the variable $m$. 
		Specifically, the initial conditions satisfy \( u_0 = \psi(t_0) \) and \( v_0 = \varphi(t_0) \).
	
		In the  following  steps, we assume the presence of a constant \(\mu > 0\) 
		in order that the coefficients of the compound quadrature rules (\ref{2.11}) and (\ref{2.12}) fulfill the necessary conditions:
		
		\begin{equation}\label{2.13}
			\begin{aligned}
				&h\sqrt{(m+1)\sum_{q=0}^m|\alpha_q|^2}<\mu, \ mh=\tau,\\&h\sqrt{(m+1)\sum_{q=0}^m|\beta_q|^2}<\mu,\ mh=\tau.
			\end{aligned}
		\end{equation}
		Method (\ref{2.10}) with (\ref{2.11}) and (\ref{2.12}) will further be called CQRK methods.
		\begin{remark}
			In this work, we focus on implicit Runge-Kutta methods for solving nonlinear DIDAEs. This choice is motivated by the fact that such systems often exhibit strong stiffness due to the interplay of algebraic constraints, delay terms, and integral components. Implicit Runge-Kutta methods, especially $A$--stable schemes, are well-suited for handling stiffness while preserving stability properties over long integration intervals. In contrast, explicit methods typically suffer from severe step size restrictions and stability degradation when applied to nonlinear DIDAEs.
		\end{remark}
		\section{Introductory concepts and basic lemmas}
		This section, we introduce several definitions and lemmas that are crucial for obtaining the main result outlined below.
		
		\begin{definition}[see \cite{Burrage1980}]
			Let \( k \) and \( l \) be real constants. A Runge-Kutta method \( (A,\,b^\mathsf{T},\,c) \) is called \((k,\,l)\)--algebraically stable if there exists a diagonal matrix \( D = \mathrm{diag}(d_1,\,d_2,\, \dots,\,d_s) \) with non-negative entries such that the matrix \( \mathcal{M} = [m_{ij}] \) is positive semi-definite, where
			\[
			\mathcal{M} = \begin{pmatrix}
				k - 1 - 2l e^\mathsf{T} D e & e^\mathsf{T} D - b^\mathsf{T} - 2l e^\mathsf{T} D A \\
				D e - b - 2l A^\mathsf{T} D e & D A + A^\mathsf{T} D - b b^\mathsf{T} - 2l A^\mathsf{T} D A
			\end{pmatrix},
			\]
			and \( e = [1,\, 1,\, \dots,\, 1]^\mathsf{T} \). Particularly, when \( k = 1 \) and \( l = 0 \), the method is called algebraically stable.
		\end{definition}
		
		Initially, we present the following notation and conventions:
		\begin{gather*}
			w_{n}=u_{n}-\tilde{u}_{n},\quad W_{i}^{(n)}=u_{i}^{(n)}-\tilde{u}_{i}^{(n)},\\
			r_n=v_n-\tilde{v}_n,\quad v_j^{(n)}=g(t_j^{(n)},\,u_j^{(n)},\,l_j^{(n)}),\quad \tilde{v}_j^{(n)}=g(t_j^{(n)},\,\tilde{u}_j^{(n)},\,\tilde{l}_j^{(n)}),\quad R_j^{(n)}=v_j^{(n)}-\tilde{v}_j^{(n)},\\
			Q_{i}^{(n)}=f(t_{n}+c_{i}h,\,u_{i}^{(n)},\,p_i^{(n)})-f(t_{n}+c_{i}h,\,\tilde{u}_{i}^{(n)},\,\tilde{p}_i^{(n)}).
		\end{gather*}
		Then it follows from (\ref{2.10}) that
		\begin{equation}\label{3.1}
			W_i^{(n)}=w_n+h\sum_{j=1}^sa_{ij}\,Q_j^{(n)},\ i=1,2,\cdots,s,
		\end{equation}
		\begin{equation}\label{3.2}
			w_{n+1}=w_n+h\sum_{j=1}^sb_j\,Q_j^{(n)}.
		\end{equation}

		\begin{definition}
			The CQRK methods are called global stable if there exist positive constants  $\mathsf{H}_1>0$ and $\mathsf{H}_2>0$, which depend only on $L_i\,(i=1,\,2,\,\cdots,\,7),\ \alpha,\ \tau$ and the methods, such that the following conditions hold:
			\begin{equation}
				\begin{aligned}
					&\|w_n\|\leq\mathsf{H}_1\max_{t_0-\tau\leq t\leq t_0}\{\|\psi(t)-\tilde{\psi}(t)\|,\ \|\varphi(t)-\tilde{\varphi}(t)\|\},\ \forall n \geq 1,\\
					&\|r_n\|\leq\mathsf{H}_2\max_{t_0-\tau\leq t\leq t_0}\{\|\psi(t)-\tilde{\psi}(t)\|,\ \|\varphi(t)-\tilde{\varphi}(t)\|\},\ \forall n \geq 1.
				\end{aligned}
			\end{equation} 
		\end{definition}
		Global stability means that the perturbations  in the numerical solution of CQRK methods are directly governed by the problem's initial perturbation. A sufficiently small initial perturbation leads to a correspondingly small perturbation in the numerical solution.
		\begin{definition}
			The CQRK methods are called asymptotically stable if 
			\begin{equation}
				\lim\limits_{n\rightarrow\infty}\|w_n\|=0,\quad \lim\limits_{n\rightarrow\infty}\|r_n\|=0.
			\end{equation} 
		\end{definition}
		Asymptotic stability of the CQRK methods guarantees that any small perturbations introduced into the numerical solution will decay and asymptotically vanish as the time step progresses to infinity, provided the time step size satisfies the stability condition.
		
		The following two lemmas are of significance for the purpose of presenting the stability analysis.
		\begin{lemma}[see \cite{lemma2}]\label{lemma3.1}
			Suppose that $\{\chi_i\}^n_{i=0}$ is an arbitrary sequence of non-negative real numbers. Thus, the inequality below holds
			\begin{equation}
				\sum_{i=0}^n\sum_{j=0}^m\chi_{i-j}\leq(m+1)\sum_{i=0}^n\chi_i+\frac{m(m+1)}{2}\max_{-m\leq q\leq-1}\{\chi_q\},\quad \forall n,m\geq0.
			\end{equation}
		\end{lemma}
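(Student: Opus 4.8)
The plan is to prove the inequality by a direct reindexing-and-counting argument: interchange the two summations, substitute $k=i-j$ in the inner sum so that the index runs over the actual subscripts of $\chi$, and then separate the non-negative subscripts from the negative ones, exploiting throughout that every $\chi_k\ge0$.

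First I would write
\begin{equation*}
\sum_{i=0}^n\sum_{j=0}^m\chi_{i-j}=\sum_{j=0}^m\sum_{i=0}^n\chi_{i-j}=\sum_{j=0}^m\sum_{k=-j}^{n-j}\chi_k .
\end{equation*}
For each fixed $j$ with $0\le j\le m$ I would bound the inner sum by splitting off the negative subscripts. Since $n-j\le n$ and all terms are non-negative, the part with $k\ge0$ is at most $\sum_{k=0}^{n}\chi_k$, while the part with $k<0$ ranges only over subscripts lying in $\{-m,\dots,-1\}$ and consists of at most $j$ terms, hence is at most $j\max_{-m\le q\le-1}\chi_q$. This yields, for every admissible $j$,
\begin{equation*}
\sum_{k=-j}^{n-j}\chi_k\le\sum_{k=0}^{n}\chi_k+j\max_{-m\le q\le-1}\chi_q .
\end{equation*}

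Summing this estimate over $j=0,1,\dots,m$ and using $\sum_{j=0}^m 1=m+1$ together with $\sum_{j=0}^m j=\tfrac{m(m+1)}{2}$ produces exactly the two terms on the right-hand side of the claimed inequality, which completes the argument.

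I expect no genuine analytic difficulty here; the only point requiring care is the bookkeeping of the summation ranges after the substitution. Specifically, I would verify that the reindexed negative subscripts never leave $\{-m,\dots,-1\}$ (guaranteed by $j\le m$) and that the degenerate situation $n<j$ — in which the non-negative block is empty — is absorbed harmlessly by non-negativity rather than breaking the split. Apart from this, the whole argument rests on nothing beyond $\chi_k\ge0$ and elementary counting.
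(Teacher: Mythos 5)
Your proof is correct. Note that the paper itself contains no proof of Lemma \ref{lemma3.1}: the result is quoted from the cited reference, so there is no in-paper argument to compare yours against; your reindexing-and-counting argument stands as a complete, self-contained proof that would make the paper independent of that citation. Every step checks out: interchanging the two finite sums and substituting $k=i-j$ is exact; for each fixed $j\le m$ the non-negative-index block is at most $\sum_{k=0}^{n}\chi_k$ by non-negativity, and the negative-index block has at most $j$ terms, each with index in $\{-m,\dots,-1\}$ (precisely because $j\le m$), hence is at most $j\max_{-m\le q\le -1}\chi_q$; summing over $j$ with $\sum_{j=0}^m 1=m+1$ and $\sum_{j=0}^m j=\tfrac{m(m+1)}{2}$ yields exactly the claimed bound. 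You also correctly identify and dispose of the only delicate case, $j>n$ (possible when $m>n$): there the non-negative block is empty, the whole inner sum consists of $n+1\le j$ terms with indices in $\{-m,\dots,-1\}$, and the estimate survives because the maximum is itself non-negative. The one hypothesis worth stating explicitly, which both the lemma and your proof use tacitly, is that the sequence is actually defined and non-negative on the negative indices $-m,\dots,-1$ as well, not just on $0,\dots,n$ as the lemma's wording suggests.
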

		\begin{lemma}\label{lemma3.2}
			Suppose that a $(k,l)$--algebraically stable Runge-Kutta method  $(A,\,b^\mathsf{T},\,c)$  is utilized for solving problem  \eqref{2.1} and  its perturbed counterpart  \eqref{2.8}, where both problems satisfy condition \eqref{2.2}, and suppose the compound quadrature rules \eqref{2.11} and \eqref{2.12} satisfy condition \eqref{2.13}. Consequently, the following inequality holds
			\begin{equation}\label{3.6}
				\begin{aligned}
					\|w_{n+1}\|^2\leq &k\|w_n\|^2+\sum_{j=1}^sd_j((2h\alpha+hL_1-2l)\|W_j^{(n)}\|^2\\&+\frac{2\mu^2hL_1}{m+1}(\sum\limits_{q=0}^m(L_4^2\|W_j^{(n-q)}\|^2+L_5^2\|R_j^{(n-q)}\|^2)).
				\end{aligned}
			\end{equation}
		\end{lemma}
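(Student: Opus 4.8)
The plan is to start from the one-step recursions \eqref{3.1}--\eqref{3.2} and extract a fundamental energy inequality from the $(k,l)$-algebraic stability hypothesis. Writing $x_0 = w_n$ and $x_i = hQ_i^{(n)}$ for $1\le i\le s$, the stage and step relations read $W_i^{(n)} = x_0 + \sum_j a_{ij}x_j$ and $w_{n+1} = x_0 + \sum_j b_j x_j$. First I would verify the algebraic identity
\[
k\|w_n\|^2 + 2h\sum_{i=1}^s d_i\Re\langle W_i^{(n)},\,Q_i^{(n)}\rangle - 2l\sum_{i=1}^s d_i\|W_i^{(n)}\|^2 - \|w_{n+1}\|^2 = \sum_{p,q=0}^s m_{pq}\,\Re\langle x_p,\,x_q\rangle,
\]
where $[m_{pq}] = \mathcal{M}$ is exactly the stability matrix of the definition. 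Expanding the left side and collecting the coefficients of $\|x_0\|^2$, of $\Re\langle x_0,x_j\rangle$, and of $\Re\langle x_j,x_k\rangle$ should reproduce the three blocks $k-1-2le^\mathsf{T}De$, $\,e^\mathsf{T}D - b^\mathsf{T} - 2le^\mathsf{T}DA$, and $\,DA + A^\mathsf{T}D - bb^\mathsf{T} - 2lA^\mathsf{T}DA$. Since $D$ has non-negative entries and $\mathcal{M}$ is positive semi-definite, the right side is non-negative, which yields
\[
\|w_{n+1}\|^2 \le k\|w_n\|^2 + 2h\sum_{i=1}^s d_i\Re\langle W_i^{(n)},\,Q_i^{(n)}\rangle - 2l\sum_{i=1}^s d_i\|W_i^{(n)}\|^2.
\]

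Next I would estimate $\Re\langle W_i^{(n)},Q_i^{(n)}\rangle$ by splitting $Q_i^{(n)}$ into a difference in the state argument and a difference in the integral argument,
\[
Q_i^{(n)} = \bigl(f(t_i^{(n)},u_i^{(n)},p_i^{(n)}) - f(t_i^{(n)},\tilde{u}_i^{(n)},p_i^{(n)})\bigr) + \bigl(f(t_i^{(n)},\tilde{u}_i^{(n)},p_i^{(n)}) - f(t_i^{(n)},\tilde{u}_i^{(n)},\tilde{p}_i^{(n)})\bigr).
\]
The one-sided Lipschitz condition \eqref{2.6} bounds the contribution of the first piece by $\alpha\|W_i^{(n)}\|^2$, while \eqref{2.2} together with the Cauchy--Schwarz inequality and the elementary bound $ab \le \tfrac12(a^2+b^2)$ bounds that of the second by $\tfrac{L_1}{2}\bigl(\|W_i^{(n)}\|^2 + \|p_i^{(n)} - \tilde{p}_i^{(n)}\|^2\bigr)$. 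Hence
\[
2h\sum_{i=1}^s d_i\Re\langle W_i^{(n)},\,Q_i^{(n)}\rangle \le \sum_{i=1}^s d_i\Bigl((2h\alpha + hL_1)\|W_i^{(n)}\|^2 + hL_1\|p_i^{(n)} - \tilde{p}_i^{(n)}\|^2\Bigr).
\]

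It then remains to control the quadrature perturbation. From \eqref{2.11}, $p_i^{(n)} - \tilde{p}_i^{(n)} = h\sum_{q=0}^m \alpha_q\bigl(K_1(t_i^{(n)},t_i^{(n-q)},u_i^{(n-q)},v_i^{(n-q)}) - K_1(t_i^{(n)},t_i^{(n-q)},\tilde{u}_i^{(n-q)},\tilde{v}_i^{(n-q)})\bigr)$, and applying \eqref{2.4}, Cauchy--Schwarz over the index $q$, and $(x+y)^2 \le 2(x^2+y^2)$ gives
\[
\|p_i^{(n)} - \tilde{p}_i^{(n)}\|^2 \le 2h^2\Bigl(\sum_{q=0}^m|\alpha_q|^2\Bigr)\sum_{q=0}^m\bigl(L_4^2\|W_i^{(n-q)}\|^2 + L_5^2\|R_i^{(n-q)}\|^2\bigr).
\]
The quadrature assumption \eqref{2.13}, i.e. $h^2(m+1)\sum_q|\alpha_q|^2 < \mu^2$, turns the prefactor into $\tfrac{2\mu^2}{m+1}$. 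Substituting this into the previous display and inserting it into the algebraic-stability estimate reproduces \eqref{3.6} term by term.

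The principal obstacle is the first step: establishing the quadratic-form identity that ties the combination $k\|w_n\|^2 + 2h\sum d_i\Re\langle W_i^{(n)},Q_i^{(n)}\rangle - 2l\sum d_i\|W_i^{(n)}\|^2 - \|w_{n+1}\|^2$ to the matrix $\mathcal{M}$, since matching each block demands careful bookkeeping of the cross terms $\Re\langle x_0,x_j\rangle$ and $\Re\langle x_j,x_k\rangle$; once that identity is in place, everything else is a direct application of the Lipschitz hypotheses and the quadrature bound. I note that no sign is imposed on the coefficient $(2h\alpha + hL_1 - 2l)$, as \eqref{3.6} is a purely algebraic estimate; the sign conditions enter only later when this recursion is iterated.
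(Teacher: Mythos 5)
Your proposal is correct and follows essentially the same route as the paper's proof: the $(k,l)$-algebraic stability quadratic-form inequality (which the paper simply cites from Burrage--Butcher rather than re-deriving, as you propose to do), the same splitting of $Q_j^{(n)}$ estimated via the one-sided Lipschitz condition \eqref{2.6} and the classical condition \eqref{2.2} with Young's inequality, and the same Cauchy--Schwarz plus \eqref{2.13} bound on $\|p_j^{(n)}-\tilde{p}_j^{(n)}\|^2$. Your closing remark that no sign condition on $2h\alpha+hL_1-2l$ is needed at this stage is also consistent with how the paper uses the lemma.
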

		\begin{proof}
			
			It follows from the $(k,l)$--algebraic stability property of the method that \cite{Burrage1980}
			\begin{equation}
				\|w_{n+1}\|^2-k\|w_n\|^2-2\sum_{j=1}^sd_jRe\langle W_j^{(n)},hQ_j^{(n)}-lW_j^{(n)}\rangle=-\sum_{i=1}^{s+1}\sum_{j=1}^{s+1}m_{ij}\langle\theta_i,\theta_j\rangle,
			\end{equation}
			where $\mathcal{M}=[m_{ij}],\ \theta_1=w_n,\ \theta_{j+1}=hQ_j^{(n)},\ j=1,2,\cdots,s.$ Hence, one has 
			\begin{equation}\label{3.8}
				\|w_{n+1}\|^2\leq k\|w_n\|^2+2\sum_{j=1}^sd_j\Re\langle W_j^{(n)},hQ_j^{(n)}-lW_j^{(n)}\rangle.
			\end{equation}
			From \eqref{2.6}, another result follows
			\begin{equation}
				\begin{aligned}\label{3.9}
					2\Re\langle W_j^{(n)},hQ_j^{(n)}\rangle
					=&2h( \Re\langle u_i^{(n)}-\tilde{ u}_i^{(n)},f(t_j^{(n)},\,u_j^{(n)},\,p_j^{(n)})-f(t_j^{(n)},\,\tilde{u}_j^{(n)},\,p_j^{(n)})\rangle \\
					&+\Re\langle u_i^{(n)}-\tilde{ u}_i^{(n)},f(t_j^{(n)},\,\tilde{u}_j^{(n)},\,p_j^{(n)})-f(t_j^{(n)},\,\tilde{u}_j^{(n)},\,\tilde{p}_j^{(n)})\rangle )\\
					\leq&2h\alpha\|W_j^{(n)}\|^2+2h\|W_j^{(n)}\|\|f(t_j^{(n)},\,\tilde{u}_j^{(n)},\,p_j^{(n)})-f(t_j^{(n)},\,\tilde{u}_j^{(n)},\,\tilde{p}_j^{(n)})\|\\
					\leq&2h\alpha\|W_j^{(n)}\|^2+2h L_1\|W_j^{(n)}\|\|p_j^{(n)}-\tilde{p}_j^{(n)}\|\\
					\leq&2h\alpha\|W_j^{(n)}\|^2+hL_1(\|W_j^{(n)}\|^2+\|p_j^{(n)}-\tilde{p}_j^{(n)}\|^2),
				\end{aligned}
			\end{equation}
			where the latter is derived by applying the inequality  $2ab\leq a^2+b^2$, for all real numbers $a$ and $b$. Inserting \eqref{3.9} into \eqref{3.8}, we have
			\begin{equation}\label{3.10}
				\|w_{n+1}\|^2\leq k\|w_n\|^2+(2h\alpha+hL_1-2l)\sum\limits_{j=1}^{s}d_j\|W_j^{(n)}\|^2+hL_1\sum\limits_{j=1}^{s}d_j\|p_j^{(n)}-\tilde{p}_j^{(n)}\|^2.
			\end{equation}
			By conditions \eqref{2.4} and \eqref{2.11}, we have
			\begin{equation}\label{3.11}
				\begin{aligned}
					&\lVert p_j^{(n)}-\tilde{p}_j^{(n)}\|^2
					\\=&\|h\sum\limits_{q=0}^m\alpha_qK_1(t_j^{(n)},\,t_j^{(n-q)},\,u_j^{(n-q)},\,v_j^{(n-q)})-h\sum\limits_{q=0}^m\alpha_qK_1(t_j^{(n)},\,t_j^{(n-q)},\,\tilde{u}_j^{(n-q)},\,\tilde{v}_j^{(n-q)})\rVert^2\\
					=&\|h\sum\limits_{q=0}^m\alpha_q(K_1(t_j^{(n)},\,t_j^{(n-q)},\,u_j^{(n-q)},\,v_j^{(n-q)})-K_1(t_j^{(n)},\,t_j^{(n-q)},\,\tilde{u}_j^{(n-q)},\,\tilde{v}_j^{(n-q)}))\|^2	\\
					\leq& h^2(\sum\limits_{q=0}^m|\alpha_q|^2)(\sum\limits_{q=0}^m\|K_1(t_j^{(n)},\,t_j^{(n-q)},\,u_j^{(n-q)},\,v_j^{(n-q)})-K_1(t_j^{(n)},\,t_j^{(n-q)},\,\tilde{u}_j^{(n-q)},\,\tilde{v}_j^{(n-q)})\|^2)\\
					\leq& h^2(\sum\limits_{q=0}^m|\alpha_q|^2)(\sum\limits_{q=0}^m(L_4\|W_j^{(n-q)}\|+L_5\|R_j^{(n-q)}\|)^2)\\
					\leq &	2h^2(\sum\limits_{q=0}^m|\alpha_q|^2)(\sum\limits_{q=0}^m(L_4^2\|W_j^{(n-q)}\|^2+L_5^2\|R_j^{(n-q)}\|^2))\\
					\leq&\frac{2\mu^2}{m+1}	
					\sum\limits_{q=0}^m(L_4^2\|W_j^{(n-q)}\|^2+L_5^2\|R_j^{(n-q)}\|^2).
				\end{aligned}
			\end{equation}
			Inserting \eqref{3.11} into \eqref{3.10}, we have \eqref{3.6} and  finalize the lemma’s proof.
			
		\end{proof}
		
		\section{Stability of Runge-Kutta methods for solving DIDAEs}
		This section examines the global and asymptotic stability properties of CQRK methods. 
		\begin{theorem}\label{thero1}
			Assuming the given RK method \eqref{2.9} is $(k,l)$--algebraically stable with respect to a diagonal matrix with non-negative entries $D = \mathrm{diag}(d_i)_{i=1}^s\in \mathbb{R}^{s\times s}$, with $k\in (0,1]$. Further assume that compound quadrature formulas given by  \eqref{2.11} and \eqref{2.12} satisfy condition \eqref{2.13}. Then, the CQRK methods are globally stable, whenever
			\begin{gather}	\label{4.1}h(2\alpha+L_1+2\mu^2L_1L_4^2+\frac{2\mu^2L_1L_5^2(2L_2^2+4\mu^2L_3^2L_6^2)}{1-4\mu^2L_3^2L_7^2})<2l,\\
				\label{4.2}	2\gamma\tau L_3L_7<1,\qquad 4\mu^2L_3^2L_7^2<1,
			\end{gather}
			where $\gamma = \max\limits_{0\leq q\leq m}|\gamma_q|$.
		\end{theorem}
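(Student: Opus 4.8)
The plan is to turn the one--step energy estimate of Lemma~\ref{lemma3.2} into a summed (discrete Halanay--Gronwall) estimate, after first establishing a companion estimate for the stage perturbations $R_j^{(n)}$ of the algebraic variable. Mirroring the derivation of \eqref{3.11}, I would combine the Lipschitz bound \eqref{2.3} for $g$ with the $K_2$--bound \eqref{2.5}, the quadrature representation \eqref{2.12}, and the coefficient condition \eqref{2.13} to obtain, for each $j$ and $n$,
\begin{equation}\label{plan:R}
\|R_j^{(n)}\|^2\le 2L_2^2\|W_j^{(n)}\|^2+\frac{4\mu^2L_3^2}{m+1}\sum_{q=0}^m\bigl(L_6^2\|W_j^{(n-q)}\|^2+L_7^2\|R_j^{(n-q)}\|^2\bigr).
\end{equation}
This is the algebraic counterpart of the differential estimate in Lemma~\ref{lemma3.2}, and its self--referential $R$--term is precisely what the hypothesis $4\mu^2L_3^2L_7^2<1$ in \eqref{4.2} is designed to absorb.

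Next I would sum \eqref{plan:R} over $n=0,1,\dots,N$ and apply Lemma~\ref{lemma3.1} to each double sum $\sum_n\sum_q$, using $mh=\tau$ to convert the boundary contribution $\tfrac{m(m+1)}{2}$ into a finite multiple of the initial data. The resulting inequality carries $\sum_n\|R_j^{(n)}\|^2$ on both sides, with coefficient $4\mu^2L_3^2L_7^2<1$ on the right, so it can be solved for that sum, yielding
\begin{equation}\label{plan:Rsum}
\sum_{n=0}^N\|R_j^{(n)}\|^2\le \frac{2L_2^2+4\mu^2L_3^2L_6^2}{1-4\mu^2L_3^2L_7^2}\sum_{n=0}^N\|W_j^{(n)}\|^2+C_j^{R},
\end{equation}
where $C_j^{R}$ gathers the (finite) initial--data terms. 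This decouples the algebraic stage perturbations from the differential ones at the level of the summed energies.

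I would then sum the estimate \eqref{3.6}. Since $k\in(0,1]$, the chain $\|w_{n+1}\|^2\le k\|w_n\|^2+(\cdots)_n\le\|w_n\|^2+(\cdots)_n$ telescopes to $\|w_{N+1}\|^2\le\|w_0\|^2+\sum_{n=0}^N(\cdots)_n$. Applying Lemma~\ref{lemma3.1} once more to the $W$-- and $R$--double sums (again using $mh=\tau$) and substituting \eqref{plan:Rsum}, the net coefficient of $d_j\sum_{n=0}^N\|W_j^{(n)}\|^2$ becomes
\[
h\Bigl(2\alpha+L_1+2\mu^2L_1L_4^2+\frac{2\mu^2L_1L_5^2(2L_2^2+4\mu^2L_3^2L_6^2)}{1-4\mu^2L_3^2L_7^2}\Bigr)-2l,
\]
which is negative exactly by hypothesis \eqref{4.1}. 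As $d_j\ge 0$ and $\|W_j^{(n)}\|^2\ge 0$, these terms are discarded, leaving $\|w_{N+1}\|^2$ bounded by $\|w_0\|^2$ plus a constant multiple of the initial perturbation; this is the first global--stability bound, with an explicit $\mathsf{H}_1$.

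Finally, for the algebraic step variable $r_n$ I would use \eqref{2.3}, \eqref{2.5} and the quadrature \eqref{l_n} to get $\|r_n\|\le L_2\|w_n\|+hL_3\gamma\sum_{q=0}^m(L_6\|w_{n-q}\|+L_7\|r_{n-q}\|)$ with $\gamma=\max_q|\gamma_q|$; bounding $h(m+1)\le 2\tau$ and invoking $2\gamma\tau L_3L_7<1$ from \eqref{4.2}, a maximum--norm inversion gives $\max_n\|r_n\|\le\frac{L_2+2\gamma\tau L_3L_6}{1-2\gamma\tau L_3L_7}\max_n\|w_n\|+(\text{init})$, which together with the $w$--bound yields the second bound with an explicit $\mathsf{H}_2$. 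The main obstacle is the bookkeeping: carrying the diagonal weights $d_j$ and the initial--data (boundary) terms faithfully through the two applications of Lemma~\ref{lemma3.1}, and verifying that the two self--referential inversions --- the $L^2$--sum inversion \eqref{plan:Rsum} for $R_j^{(n)}$ and the max--norm inversion for $r_n$ --- reproduce exactly the coefficient structure of \eqref{4.1}--\eqref{4.2}, along with the sign check that the assembled $\|W_j^{(n)}\|^2$--coefficient is nonpositive.
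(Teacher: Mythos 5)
Your proposal is correct and follows essentially the same route as the paper's proof: telescoping the Lemma~\ref{lemma3.2} estimate via $k\in(0,1]$, deriving the summed $R_j^{(n)}$ bound from \eqref{2.3}, \eqref{2.5}, \eqref{2.12}, \eqref{2.13} and inverting it with $4\mu^2L_3^2L_7^2<1$, applying Lemma~\ref{lemma3.1} with $mh=\tau$ to assemble exactly the coefficient in \eqref{4.1}, and finally bounding $r_n$ through \eqref{l_n} with the condition $2\gamma\tau L_3L_7<1$. The only cosmetic difference is that you absorb the self-referential $\|r_{n-q}\|$ term by a maximum-norm inversion over a finite window, whereas the paper uses an equivalent two-case recursion back to the initial interval.
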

		\begin{proof}
			Since $0<k\leq 1$, using induction on \eqref{3.6}, we have
			\begin{equation}\label{4.3}
				\begin{aligned}
					\|w_{n+1}\|^2\leq&\|w_0\|^2+(2h\alpha+hL_1-2l)\sum_{i=0}^n\sum_{j=1}^sd_j\|W_j^{(i)}\|^2\\
					&+\frac{2\mu^2hL_1}{m+1}\sum_{j=1}^sd_j\sum_{i=0}^n\sum\limits_{q=0}^m(L_4^2\|W_j^{(i-q)}\|^2+L_5^2\|R_j^{(i-q)}\|^2).
				\end{aligned}
			\end{equation}
			It follows from Lemma \ref{lemma3.1} and condition $mh=\tau$ that
			\begin{equation}\label{4.4}
				\begin{aligned}
					\|w_{n+1}\|^2\leq&\|w_0\|^2+(2h\alpha+hL_1-2l)\sum_{j=1}^sd_j\sum_{i=0}^n\|W_j^{(i)}\|^2\\
					&+2\mu^2hL_1L_4^2\sum_{j=1}^sd_j\|W_j^{(i)}\|^2+\mu^2\tau L_1L_4^2\sum_{j=1}^sd_j\max_{-m\leq i\leq-1}\{\|W_j^{(i)}\|^2\}\\
					&+2\mu^2hL_1L_5^2\sum_{j=1}^sd_j\sum_{i=0}^n\|R_j^{(i)}\|^2+\mu^2\tau L_1L_5^2\sum_{j=1}^sd_j\max_{-m\leq i\leq-1}\{\|R_j^{(i)}\|^2\}\\
					=&\|w_0\|^2+(2h\alpha+hL_1-2l+2\mu^2hL_1L_4^2)\sum_{j=1}^sd_j\sum_{i=0}^n\|W_j^{(i)}\|^2\\
					&+\mu^2\tau L_1L_4^2\sum_{j=1}^sd_j\max_{-m\leq i\leq-1}\{\|W_j^{(i)}\|^2\}+2\mu^2hL_1L_5^2\sum_{j=1}^sd_j\sum_{i=0}^n\|R_j^{(i)}\|^2\\
					&+\mu^2\tau L_1L_5^2\sum_{j=1}^sd_j\max_{-m\leq i\leq-1}\{\|R_j^{(i)}\|^2\}.
				\end{aligned}
			\end{equation}
			By \eqref{2.3}, we have
			\begin{equation}\label{4.5}
				\begin{aligned}
					\sum_{i=0}^n\|R_j^{(i)}\|^2&=\sum_{i=0}^n\|g(t_j^{(i)},\,u_{j}^{(i)},\,l_{j}^{(i)})-g(t_j^{(i)},\,\tilde{u}_{j}^{(i)},\,\tilde{l}_{j}^{(i)})\|^2\\&\leq 2L_2^2\sum_{i=0}^n\|u_{j}^{(i)}-\tilde{u}_{j}^{(i)}\|^2+2L_3^2 \sum_{i=0}^n\|l_{j}^{(i)}-\tilde{l}_{j}^{(i)}\|^2.
				\end{aligned}
			\end{equation}
			With conditions \eqref{2.5}, \eqref{2.12} and \eqref{2.13}, we have
			\begin{equation}\label{4.6}
				\begin{aligned}
					&\|l_{j}^{(i)}-\tilde{l}_{j}^{(i)}\|^2\\=&\|h\sum\limits_{q=0}^m\beta_q\,K_2(t_j^{(i)},\,t_j^{(i-q)},\,u_j^{(i-q)},\,v_j^{(i-q)})-h\sum\limits_{q=0}^m\beta_q\,K_2(t_j^{(i)},\,t_j^{(i-q)},\,\tilde{u}_j^{(i-q)},\,\tilde{v}_j^{(i-q)})\|^2\\
					=&\|h\sum\limits_{q=0}^m\beta_q(K_2(t_j^{(i)},\,t_j^{(i-q)},\,u_j^{(i-q)},\,v_j^{(i-q)})-K_2(t_j^{(i)},\,t_j^{(i-q)},\,\tilde{u}_j^{(i-q)},\,\tilde{v}_j^{(i-q)}))\|^2\\
					\leq& 2h^2(\sum\limits_{q=0}^m|\beta_q|^2)(\sum\limits_{q=0}^m(L_6^2\|W_j^{(i-q)}\|^2+L_7^2\|R_j^{(i-q)}\|^2))\\
					\leq& \frac{2\mu^2}{m+1}\sum\limits_{q=0}^m(L_6^2\|W_j^{(i-q)}\|^2+L_7^2\|R_j^{(i-q)}\|^2).
				\end{aligned}
			\end{equation}
			Embedding \eqref{4.6} into \eqref{4.5} yields
			\begin{equation}\label{4.7}
				\begin{aligned}
					\sum_{i=0}^n\|R_j^{(i)}\|^2&\leq 2L_2^2\sum_{i=0}^n\|W_j^{(i)}\|^2+\frac{4\mu^2L_3^2}{m+1}\sum_{i=0}^n\sum\limits_{q=0}^m(L_6^2\|W_j^{(i-q)}\|^2+L_7^2\|R_j^{(i-q)}\|^2).
				\end{aligned}
			\end{equation}
			Applying lemma \ref{lemma3.1} to \eqref{4.7} shows
			\begin{equation}\label{4.8}
				\begin{aligned}
					\sum_{i=0}^n\|R_j^{(i)}\|^2
					\leq&2L_2^2\sum_{i=0}^n\|W_j^{(i)}\|^2+\frac{4\mu^2L_3^2}{m+1}(L_6^2(m+1)\sum_{i=0}^n\|W_j^{(i)}\|^2+\frac{L_6^2m(m+1)}{2}\max_{-m\leq i\leq-1}\\&\times \{\|W_j^{(i)}\|^2\}
					+L_7^2(m+1)\sum_{i=0}^n\|R_j^{(i)}\|^2+\frac{L_7^2m(m+1)}{2}\max_{-m\leq i\leq-1}\{\|R_j^{(i)}\|^2\})\\
					=& (2L_2^2+4\mu^2L_3^2L_6^2)\sum_{i=0}^n\|W_j^{(i)}\|^2+2\mu^2L^2_3L^2_6m\max_{-m\leq i\leq-1}\{\|W_j^{(i)}\|^2\}\\
					&+4\mu^2L_3^2L_7^2\sum_{i=0}^n\|R_j^{(i)}\|^2+2\mu^2L^2_3L^2_7m\max_{-m\leq i\leq-1}\{\|R_j^{(i)}\|^2\}.
				\end{aligned}
			\end{equation}
			Bound \eqref{4.8} therefore implies
			\begin{equation}\label{4.9}
				\begin{aligned}
					\sum_{i=0}^n\|R_j^{(i)}\|^2\leq&\frac{2L_2^2+4\mu^2L_3^2L_6^2}{1-4\mu^2L_3^2L_7^2}\sum_{i=0}^n\|W_j^{(i)}\|^2+\frac{2\mu^2mL^2_3L^2_6}{1-4\mu^2L_3^2L_7^2}\max_{-m\leq i\leq-1}\{\|W_j^{(i)}\|^2\}\\
					&+\frac{2\mu^2mL^2_3L^2_7}{1-4\mu^2L_3^2L_7^2}\max_{-m\leq i\leq-1}\{\|R_j^{(i)}\|^2\}.
				\end{aligned}
			\end{equation}
			By inserting equation \eqref{4.9} into the expression for $\|w_{n+1}\|^2$, we derive an additional upper limit for  $\|w_{n+1}\|^2$:
			\begin{equation}\label{4.10}
				\begin{aligned}
					&\|w_0\|^2+(2h\alpha+hL_1-2l+2\mu^2hL_1L_4^2+\frac{2\mu^2hL_1L_5^2(2L_2^2+4\mu^2L_3^2L_6^2)}{1-4\mu^2L_3^2L_7^2})\\
					&\times \sum_{j=1}^sd_j\sum_{i=0}^n\|W_j^{(i)}\|^2+(\mu^2\tau L_1L_4^2+\frac{4\mu^4\tau L_1L_3^2L_5^2L_6^2}{1-4\mu^2L_3^2L_7^2})\sum_{j=1}^sd_j\max_{-m\leq i\leq-1}\{\|W_j^{(i)}\|^2\}\\
					&+(\mu^2\tau L_1L_5^2+\frac{4\mu^4\tau L_1L_3^2L_5^2L_7^2}{1-4\mu^2L_3^2L_7^2})\sum_{j=1}^sd_j\max_{-m\leq i\leq-1}\{\|R_j^{(i)}\|^2\}.
				\end{aligned}
			\end{equation}
			This step refines the estimation of the bound, providing a tighter approximation  to the value of $\|w_{n+1}\|^2$.	Since by
			\[
			h(2\alpha+L_1+2\mu^2L_1L_4^2+\frac{2\mu^2L_1L_5^2(2L_2^2+4\mu^2L_3^2L_6^2)}{1-4\mu^2L_3^2L_7^2})<2l,
			\]
			hence \eqref{4.10} implies that 
			\begin{equation}\label{4.11}
				\begin{aligned}
					\|w_{n+1}\|^2
					\leq &\|w_0\|^2+(\mu^2\tau L_1L_4^2+\frac{4\mu^4\tau L_1L_3^2L_5^2L_6^2}{1-4\mu^2L_3^2L_7^2})\sum_{j=1}^sd_j\max_{-m\leq i\leq-1}\{\|W_j^{(i)}\|^2\}
					\\&+(\mu^2\tau L_2L_5^2+\frac{4\mu^4\tau L_1L_3^2L_5^2L_7^2}{1-4\mu^2L_3^2L_7^2}) \sum_{j=1}^sd_j\max_{-m\leq i\leq-1}\{\|R_j^{(i)}\|^2\}
					.
				\end{aligned}
			\end{equation}
			Therefore there exists a constant $\mathsf{H}_1$, which depends only on $L_i\,(i=1,\,2,\,\cdots,\,7),\ \alpha,\ \tau$ and the methods, such that the following inequality holds
			\begin{equation}\label{4.12}
				\|w_n\|\leq\mathsf{H}_1\max_{t_0-\tau\leq t\leq t_0}\{\|\psi(t)-\tilde{\psi}(t)\|,\ \|\varphi(t)-\tilde{\varphi}(t)\|\}.
			\end{equation}
			To simplify notation, let $H_1:=\mathsf{H}_1\max\limits_{t_0-\tau\leq t\leq t_0}\{\|\psi(t)-\tilde{\psi}(t)\|,\ \|\varphi(t)-\tilde{\varphi}(t)\|\}$. 
			For the algebraic equation, we have
			\begin{equation}\label{4.13}
				\begin{aligned}
					\|r_n\|=&\|g(t_n,u_n,l_n)-g(t_n,\tilde{u}_n,\tilde{l}_n)\|\\
					\leq &L_2\|w_n\|+L_3\|h\sum\limits_{q=0}^m\gamma_qK_2(t_n,t_{n-q},u_{n-q},v_{n-q})
					\\&-h\sum\limits_{q=0}^m\gamma_qK_2(t_n,t_{n-q},\tilde{u}_{n-q},\tilde{v}_{n-q})\|\\
					\leq &L_2\|w_n\|+\gamma hL_3\sum_{q=0}^{m}(L_6\|w_{n-q}\|+L_7\|r_{n-q}\|)\\
					\leq & (L_2+2\gamma\tau L_3L_6)H_1+\gamma h L_3L_7\sum_{q=0}^{m}\|r_{n-q}\|.
				\end{aligned}
			\end{equation}
			For any $n\geq m$, we consider two cases. Firstly, if $\max\limits_{0\leq q\leq m}\|r_{n-q}\|=\|r_{n}\|$, we have
			\[
			\|r_n\|\leq (L_2+2\gamma \tau L_3L_6)H_1+2\gamma\tau L_3L_7\|r_{n}\|,
			\]	
			and therefore 
			\begin{equation}\label{4.14}
				\|r_n\|\leq \frac{L_2+2\gamma \tau L_3L_6}{1-2\gamma\tau L_3L_7}H_1.
			\end{equation}
			Secondly, suppose there exist integers \( 0 < \lambda_i \leq m \) for \( i = 1,\, \dots,\, m \), with the property that $\max\limits_{0\leq q\leq m}\|r_{n-q}\|=\|r_{n-\lambda_i}\|$, then there exists a constant $\omega>0$ that satisfies $-m\leq n-\sum\limits_{i=0}^{\omega}\lambda_i<-1$, hence, we have
			\begin{equation}\label{4.15}
				\begin{aligned}
					\|r_n\|\leq& (L_2+2\gamma \tau L_3L_6)H_1+2\gamma\tau L_3L_7\|r_{n-\lambda_i}\|\\
					\leq& \sum_{q=0}^{\omega}(2\gamma\tau L_2L_7)^q(L_2+2\gamma \tau L_3L_6)H_1+(2\gamma\tau L_3L_7)^{\omega}\|r_{n-\sum_{i=0}^{\omega}\lambda_i}\|.
				\end{aligned}
			\end{equation}
			Combining this with \eqref{4.14} shows that there exists a constant $\mathsf{H}_2$, which depends only on $L_i (i=1,\,2,\,\cdots,\,7),\ \alpha,\ \tau$ and the methods, such that the following inequality holds
			\begin{equation}\label{4.16}
				\|r_n\|\leq\mathsf{H}_2\max_{t_0-\tau\leq t\leq t_0}\{\|\psi(t)-\tilde{\psi}(t)\|,\ \|\varphi(t)-\tilde{\varphi}(t)\|\}.
			\end{equation}
			This, together with \eqref{4.12}, implies that the method is globally stable.
		\end{proof}
		
		In the following discussion, the concept of asymptotic stability will be examined. The subsequent theorem will be utilised in this endeavour.
		
		\begin{theorem}\label{theorem2}
			Suppose that the fundamental RK method \eqref{2.9} is algebraically stable for a diagonal matrix with positive entries $D > 0$ and satisfies $\left|1 - b^\mathrm{T} A^{-1} e\right| < 1$ with $\det A \neq 0$. Additionally, suppose the quadrature formulas \eqref{2.11}--\eqref{2.12} meet the condition \eqref{2.13}. Then, the CQRK methods are asymptotically stable provided that,
			\begin{gather}	\label{4.17}2\alpha+L_1+2\mu^2L_1L_4^2+\frac{2\mu^2L_1L_5^2(2L_2^2+4\mu^2L_3^2L_6^2)}{1-4\mu^2L_3^2L_7^2}<0,\\
				\label{4.18}	2\gamma\tau L_3L_7<1,\qquad 4\mu^2L_3^2L_7^2<1,
			\end{gather}
			where $\gamma = \max\limits_{0\leq q\leq m}|\gamma_q|$.
		\end{theorem}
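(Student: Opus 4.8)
The plan is to deduce asymptotic stability from the accumulated energy estimate already obtained inside the proof of Theorem~\ref{thero1}, now sharpened by the extra hypotheses $D>0$, $\det A\neq 0$ and $\left|1-b^{\mathsf{T}}A^{-1}e\right|<1$. Since algebraic stability is exactly $(1,0)$--algebraic stability, I would set $k=1$ and $l=0$ in Lemma~\ref{lemma3.2} and rerun the manipulations that produced \eqref{4.3}--\eqref{4.10}. With $l=0$ the coefficient multiplying $\sum_{j=1}^s d_j\sum_{i=0}^n\|W_j^{(i)}\|^2$ becomes $h\Theta$, where $\Theta=2\alpha+L_1+2\mu^2L_1L_4^2+\frac{2\mu^2L_1L_5^2(2L_2^2+4\mu^2L_3^2L_6^2)}{1-4\mu^2L_3^2L_7^2}$, and $\Theta<0$ by \eqref{4.17}. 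Note also that the hypotheses here imply those of Theorem~\ref{thero1} (with $l=0$), so the method is already globally stable and hence $\|w_n\|$, $\|r_n\|$ are bounded.

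Because $\|w_{n+1}\|^2\geq 0$ and the initial-data (boundary) terms in \eqref{4.10} are fixed constants, rearranging yields a uniform bound $-h\Theta\sum_{j=1}^s d_j\sum_{i=0}^n\|W_j^{(i)}\|^2\leq C$ with $C$ independent of $n$. Here the strict positivity $D>0$ is essential: since every $d_j>0$ and $-h\Theta>0$, the nonnegative series $\sum_{i=0}^\infty\|W_j^{(i)}\|^2$ converges for each $j$, so $\|W_j^{(i)}\|\to 0$ as $i\to\infty$. Substituting this into the right-hand side of \eqref{4.9} (now a convergent-series bound plus fixed boundary terms) gives $\sum_{i=0}^\infty\|R_j^{(i)}\|^2<\infty$, whence $\|R_j^{(i)}\|\to 0$ as well.

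It remains to transfer decay of the stage values to the step values. Since $\det A\neq 0$, the stage relation \eqref{3.1} can be inverted as $hQ^{(n)}=A^{-1}(W^{(n)}-e\,w_n)$; substituting into \eqref{3.2} gives $w_{n+1}=(1-b^{\mathsf{T}}A^{-1}e)\,w_n+\sum_{j=1}^s(b^{\mathsf{T}}A^{-1})_j\,W_j^{(n)}$, so that $\|w_{n+1}\|\leq\rho\|w_n\|+g_n$ with $\rho=\left|1-b^{\mathsf{T}}A^{-1}e\right|<1$ and $g_n=\|\sum_{j}(b^{\mathsf{T}}A^{-1})_jW_j^{(n)}\|\to 0$. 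As $\limsup_n\|w_n\|<\infty$ by global stability, taking $\limsup$ in the recurrence gives $\limsup_n\|w_n\|\leq\rho\limsup_n\|w_n\|$, which forces $\|w_n\|\to 0$. Finally, the algebraic estimate \eqref{4.13} yields $\|r_n\|\leq\varepsilon_n+2\gamma\tau L_3L_7\max_{0\leq q\leq m}\|r_{n-q}\|$, where $\varepsilon_n=L_2\|w_n\|+2\gamma\tau L_3L_6\max_{0\leq q\leq m}\|w_{n-q}\|\to 0$; since the $\limsup$ of a fixed-width moving maximum equals $\limsup_n\|r_n\|$ and $2\gamma\tau L_3L_7<1$ by \eqref{4.18}, the same $\limsup$ argument gives $\|r_n\|\to 0$.

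The main obstacle I anticipate is the transfer step, where the extra hypotheses genuinely enter. One must confirm that the forcing term $g_n$ in the scalar recurrence really tends to zero, and this is precisely where $D>0$ pays off: only strict positivity lets the square-summability of the $d$-weighted stage sum deliver $\|W_j^{(n)}\|\to 0$ for each individual stage. The contraction $\left|1-b^{\mathsf{T}}A^{-1}e\right|<1$ is then exactly what propagates this decay through the nonhomogeneous recurrence to $w_n$. The $r_n$ estimate is delicate only in that its governing inequality is of delay type—the moving maximum on the right includes $\|r_n\|$ itself—but the fixed-width-window $\limsup$ identity, together with $2\gamma\tau L_3L_7<1$, resolves this cleanly.
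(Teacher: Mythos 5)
Your proposal is correct and takes essentially the same route as the paper's own proof: extract $\|W_j^{(n)}\|\to 0$ from the accumulated estimate \eqref{4.10} (where $D>0$ is what lets the summability of the $d$-weighted stage sums pass to each stage), invert the stage relation using $\det A\neq 0$ to get $w_{n+1}=(1-b^{\mathsf{T}}A^{-1}e)w_n+\sum_{i,j}g_{ij}b_iW_j^{(n)}$ and conclude $w_n\to 0$ from $|1-b^{\mathsf{T}}A^{-1}e|<1$, then drive $r_n\to 0$ through \eqref{4.13} and $2\gamma\tau L_3L_7<1$. If anything, your $\limsup$ bookkeeping (with boundedness supplied by global stability) is more careful than the paper's, which treats $\lim\|r_n\|$ as if it were known to exist; your intermediate claim $\|R_j^{(i)}\|\to 0$ is superfluous but harmless.
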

		\begin{proof}
			It follow from \eqref{4.10} that 
			\begin{equation}\label{4.19}
				\lim_{n\rightarrow \infty}\|W_j^{(n)}\|=0,\ j=1,\cdots,s.
			\end{equation}	
			Since $\det A\neq0$, the matrix  $A$ is non-singular. Let $G=[g_{ij}]=A^{-1}$. From equations \eqref{3.1}--\eqref{3.2}, we derive the relationship
			\[w_{n+1}=(1-b^\mathsf{T}A^{-1}e)w_n+\sum\limits_{i=1}^{s}\sum\limits_{j=1}^{s}g_{ij}\,b_i\,W_j^{(n)}.\]
			Therefore from \eqref{4.19} and $|1-b^\mathsf{T}A^{-1}e|<1$, it is straightforward to obtain
			\begin{equation}
				\lim\limits_{n\rightarrow\infty}\|w_n\|=0.
			\end{equation}
			From \eqref{4.13} and $mh=\tau$ we obtain
			\begin{equation}\label{4.23}
				\begin{aligned}
					\lim_{n\to\infty}\|r_n\|\leq& \lim\limits_{n\rightarrow\infty}(L_2+2\beta \tau L_3L_6)\|w_n\|+\lim\limits_{n\rightarrow\infty}h\gamma L_3L_7\sum_{q=0}^{m}\|r_{n-q}\|\\
					=&\lim\limits_{n\rightarrow\infty}2\gamma\tau L_3L_7\|r_{n}\|.
				\end{aligned}
			\end{equation}
			For $2\gamma\tau L_3L_7<1$, we have 
			\begin{equation}
				\lim\limits_{n\rightarrow\infty}\|r_n\|=0.
			\end{equation}
			Thus, the proof of Theorem \ref{theorem2} is complete.
		\end{proof}
		\section{Numerical examples}
		\begin{example}
			Analyze the initial value problem for partial DIDAEs
			\begin{equation}\label{6.1}
				\begin{cases}
					&\frac{\partial y(s,t)}{\partial t}=\frac{\partial^2y(s,t)}{\partial s^2}+\int_{t-\frac{\pi}{2}}^t2y(s,\theta )z(s,\theta)d\theta+f_1(s,t),\quad 0<s<1,\, 0<t,\\&2(y(s,t)+1)z(s,t)+0.5\int_{t-\frac{\pi}{2}}^t\sin \theta\cos(2\theta)y(s,\theta)z(s,\theta)d\theta+f_2(s,t)=0,\quad 0<s<1,\, 0<t,\\&y(s,t)=(s^2-s)\cos(t),\ z(s,t)=(s^2-s)\sin(t),\quad 0<s<1,\, -\frac{\pi}{2}\leq t\leq0,\\&y(0,t)=y(1,t)=z(0,t)=z(1,t)=0,\quad 0\leq t,
				\end{cases}
			\end{equation}
			where
			$$\begin{cases}f_1(s,t)=(s^2-s)\sin(t)-2\cos(t)+(s^2-s)^2\cos(2t),
				\\f_2(s,t)=-(s^2-s)^2\sin(2t)-2(s^2-s)\sin(t)-0.125(s^2-s)^2(\sin(2t)-\cos(2t)).\end{cases}$$
			
			This problem admits a unique exact solution
			
			$$y(s,t)=(s^2-s)\cos(t)\text{ and }z(s,t)=(s^2-s)\sin(t).$$
		Applying the method of lines, equations \eqref{6.1} can be discretized as follows:
			\begin{equation}\label{6.2}
				\begin{cases}
					&\frac{\partial y_i(t)}{\partial t}=\frac{y_{i+1}(t)-2y_i(t)+y_{i-1}(t)}{h_s^2}+\int_{t-\frac{\pi}{2}}^t2y_i(\theta)z_i(\theta)d\theta+f_1(s_i,t),\ 0<t,\ i=1,2,\ldots,N-1,\\
					&2(y_i(t)+1)z_i(t)+0.5\int_{t-\frac{\pi}{2}}^t\sin(\theta)\cos(2\theta)y_i(\theta)z_i(\theta)d\theta+f_2(s_i,t)=0,\ 0<t,\ i=1,2,\ldots,N-1,\\
					&y_i(t)=(s_i^2-s_i)\cos(t),z_i(t)=(s_i^2-s_i)\sin(t),\ -\frac{\pi}{2}\leq t\leq0,\  i=1,2,\ldots,N-1,
					\\
					&y_0(t)=y_N(t)=z_0(t)=z_N(t)=0,\  0\leq t.
				\end{cases}
			\end{equation}
			Here, \(\Delta_s\) denotes the spatial discretization step size, and  \(N_s\) is a given positive integer satisfying \(N_s\Delta_s = 1\). Define the spatial grid points \(s_i = i \Delta_s\) for \(i =  1, \dots, N_s-1\), and note that  \(y_i(t) = y(s_i, t)\) and \(z_i(t) = z(s_i, t)\).
			An equal step size \( h \) is adopted, satisfying \( hN = \pi/2 \), where \( N \) is a given positive integer. We set \( N = 10\text{ and }100 \) and $N_s=10$. It can be verified that equations \eqref{6.2} satisfy conditions \eqref{2.2}--\eqref{2.6} with
			$$\alpha=-4N_s^2\sin^2 \frac{\pi}{2N_s},\ L_1=1,\ L_2= L_3=L_4=L_5=\frac{1}{2},\ L_6=L_7=\frac{1}{4}.$$		
			The given differential equation is discretized using the 2-stage Lobatto \uppercase\expandafter{\romannumeral3}C Runge-Kutta method
			\[
			(A,\,b^\mathsf{T},\,c)=\begin{array}{c|cc}
				0 & 1/2 & -1/2 \\
				1 & 1/2 & 1/2 \\
				\hline
				& 1/2 & 1/2 \\
			\end{array},
			\] and the integral terms are approximated by Compound Simpson’s rule, where $\alpha_0=\alpha_N=\beta_0=\beta_N=\frac{1}{3}$, $\alpha_i\leq\frac{4}{3},\ \beta_i\leq\frac{4}{3}$ for $i = 1, 2, \dots, N-1$, and $\gamma_i\leq \frac{4}{3}$ for $i=0,1,\cdots,N$. We therefore choose $\mu = \frac{5}{2}$and $\gamma = \frac{4}{3}$.  Noting the algebraic stability of 2-stage Lobatto \uppercase\expandafter{\romannumeral3}C method, conditions \eqref{4.1}--\eqref{4.2} and \eqref{4.17}--\eqref{4.18} are satisfied. Consequently, the method is globally and asymptotically stable.
			
			For the exact solution of equations~\eqref{6.2}, the initial values \( \{y_i(0), z_i(0)\} \) are given by:
			\[\{y_i(0)=s_i^2-s_i,\ z_i(0)=0,\quad i=1,2,\cdots,N_s-1.\}\]
			The perturbed initial funcions $\{\tilde{u}_i(0),\tilde{v}_i(0)\}$  are defined as: $$\{\tilde{y}_i(0)=s_i^2-s_i+0.5,\ \tilde{z}_i(0)=0.5,\quad  i=1,2,\cdots,N_s-1.\}$$
			Let \(\{Y_n, Z_n\}\) and \(\{\tilde{Y}_n, \tilde{Z}_n\}\) denote the numerical solutions obtained from \(\{y_i(0), z_i(0)\}\) and \(\{\tilde{y}_i(0), \tilde{z}_i(0)\}\) respectively, where
			 \begin{equation}
				\begin{aligned}
					&Y_n=[y_{1}(t_n),y_{2}(t_n),\cdots,y_{N_s-1}(t_n)],\quad Z(t)=[z_{1}(t_n),z_{2}(t_n),\cdots,z_{N_s-1}(t_n)],\\
					&\tilde{Y}_n=[\tilde{y}_{1}(t_n),\tilde{y}_{2}(t_n),\cdots,\tilde{y}_{N_s-1}(t_n)],\quad \tilde{Z}_n=[\tilde{z}_{1}(t_n),\tilde{z}_{2}(t_n),\cdots,\tilde{z}_{N_s-1}(t_n)].
				\end{aligned}
			 \end{equation}
			 Fig.\ref{fig:example1} shows the resulting disturbance errors under the condition of N=10, and 			fig.\ref{fig:example1_100} displays the resulting disturbance errors for N = 100, where  the calculations of $\|Y_n-\tilde{Y}_n\| \text{ and } \|Z_n-\tilde{Z}_n\|$ are given by
			 \begin{equation*}
			 	\begin{aligned}
			 		\| Y_n - \tilde{Y}_n \| = \sqrt{\Delta s \sum_{i=1}^{N_s - 1} | y_i(t_n) - \widetilde{y}_i(t_n) |^2},\\
			 		\| Z_n - \tilde{Z}_n \| = \sqrt{\Delta s \sum_{i=1}^{N_s - 1} | z_i(t_n) - \widetilde{z}_i(t_n) |^2}.\\
			 	\end{aligned}
			 \end{equation*}
			\begin{figure}[htbp]
				\centering
				\includegraphics{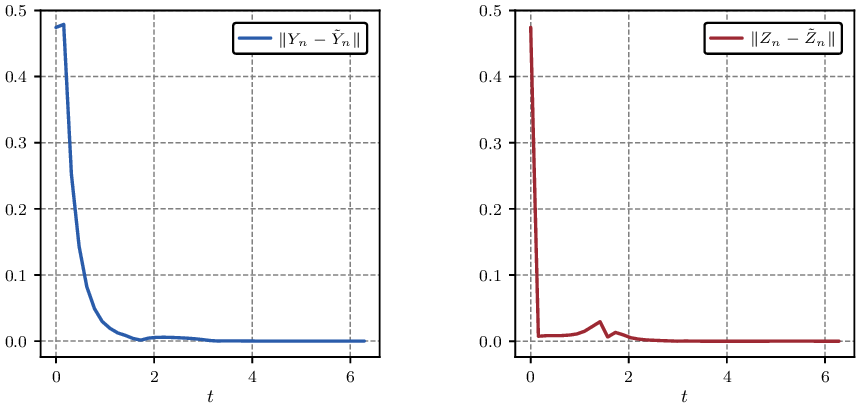}
				\caption{The disturbance errors $\|Y_n-\tilde{Y}_n\| \text{ and } \|Z_n-\tilde{Z}_n\|$ when $N=10$.}
				\label{fig:example1}
			\end{figure}

			 \begin{figure}[htbp]
			 	\centering
			 	\includegraphics{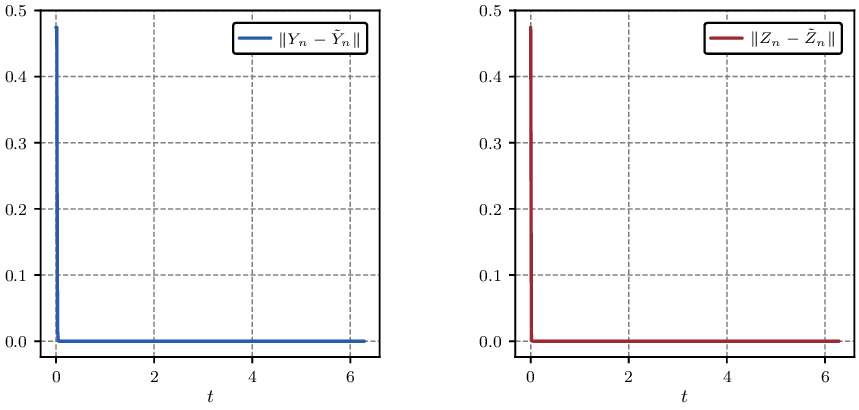}
			 	\caption{The disturbance errors $\|Y_n-\tilde{Y}_n\| \text{ and } \|Z_n-\tilde{Z}_n\|$ when $N=100$.}
			 	\label{fig:example1_100}
			 \end{figure}
			 To  demonstrate the convergence of the method, we define
			 \[
			 E(t)=\|Y_n-\tilde{Y}_n\|_\infty,\ EA(t)=\|Z_n-\tilde{Z}_n\|_\infty
			 \]
			 to represent their difference. Table \ref{tab:combined} lists partial numerical results of $E(t)$ and $EA(t)$ when \(N = 10\) and \(N = 100\) respectively. Note that since we chose the exact solution and its perturbation as the initial conditions, we can conclude from the table \ref{tab:combined} that the method is convergent.
			 \begin{table}[htbp] 
			 	\centering
			 	\caption{When numerical methods are applied to Equations \eqref{6.1}, the differences between the two solutions are \(E(t)\) and \(EA(t)\)} 
			 	\begin{tabular}{ccccccc} 
			 		\toprule 
			 		\multirow{2}{*}{$N$} & \multicolumn{3}{c}{$E(t)$} & \multicolumn{3}{c}{$EA(t)$} \\
			 		\cmidrule(lr){2-4} \cmidrule(lr){5-7}
			 		& $t = \frac{\pi}{2}$ & $t = \frac{3}{2}\pi$ & $t = \frac{5}{2}\pi$ & $t = \frac{\pi}{2}$ & $t = \frac{3}{2}\pi$ & $t = \frac{5}{2}\pi$ \\
			 		\midrule 
			 		10 & 4.9525e-03 & 7.5835e-05 & 1.3489e-06 & 6.0145e-02 & 2.1954e-05 & 1.5814e-05 \\
			 		100 & 6.6841e-06 & 5.4973e-15 & 3.4694e-18 & 1.5299e-06 & 1.5693e-13 & 6.7221e-17 \\
			 		\bottomrule 
			 	\end{tabular}
			 	\label{tab:combined}
			 \end{table}

			 In the comparative experiment, we replaced 2-stage Lobatto \uppercase\expandafter{\romannumeral3}C Runge-Kutta method with the classical 4th--order Runge-Kutta method while keeping all other conditions unchanged. When $N=100$, due to the method's failure to satisfy the $(k,l)$--algebraic stability, the numerical results exhibited divergent characteristics. Figure \eqref{example_1_compare} shows the evolution of the perturbation error $\|Y_n-\tilde{Y}_n\|$ when the classical 4th-order Runge-Kutta method is used to solve Equation \eqref{6.1}. The comparative experimental results demonstrate that the method proposed in this paper demonstrates good stability and accuracy in handling stiff problems. 
			 \begin{figure}[htbp]
			 	\centering
			 	\includegraphics{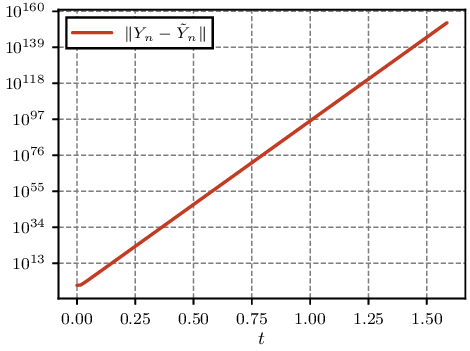}
			 	\caption{The disturbance errors $\|Y_n-\tilde{Y}_n\|$ when $N=100$.}
			 	\label{example_1_compare}
			 \end{figure}
			 
			 These numerical results confirm the correctness of the stability results obtained in this paper.  
		\end{example}
		\begin{example}
			Consider the initial value problem for DIDAEs
			\begin{equation}\label{6.3}
				\begin{cases}&u_{1}^{\prime}(t)=t^{2}e^{-t}-50u_1(t)+u_{2}(t)\int_{t-1}^{t}e^{\theta-t}(u_{2}(\theta)+v_{1}(\theta))d\theta+f_{1}(t),\quad t> 0,\\&u_{2}^{\prime}(t)=1+\sin t^{2}-50u_2(t)+u_{1}(t)\int_{t-1}^{t}e^{\theta-t}(u_{1}(\theta)-\-v_{2}(\theta))d\theta+f_{2}(t),\quad t> 0,\\&v_{1}(t)=-0.1u_{2}(t)+0.25\int_{t-1}^{t}\sin(t-\theta)(u_{2}(\theta)-0.25v_{1}(\theta))d\theta+g_{1}(t),\quad t> 0,\\&v_{2}(t)=0.2u_{1}(t)+0.25\int_{t-1}^{t}\cos(t-\theta)(u_{1}(\theta)+0.25v_{2}(\theta))d\theta+g_{2}(t),\quad t> 0,\\&u_{1}(t)=\cos (t)e^{-t},u_{2}(t)=\sin (t)e^{-t},\quad-1\leq t\leq0,\\&v_{1}(t)=(1-t)e^{-t},v_{2}(t)=(1+t)e^{-t},\quad-1\leq t\leq0.\end{cases}
			\end{equation}
			Here, \(f_1(t)\), \(f_2(t)\), \(g_1(t)\), and \(g_2(t)\) are specifically constructed functions for which the differential system \eqref{6.3} admits the exact solutions \(u(t)=e^{-t} \left[ {\begin{array}{*{20}{c}}
					{{\cos(t)}}\\
					{{\sin(t)}}
			\end{array}} \right] \) and \(v(t)=e^{-t}\left[ {\begin{array}{*{20}{c}}
			{{1-t}}\\
			{{1+t}}
		\end{array}} \right] \). The system \eqref{6.3} satisfies conditions \eqref{2.2}--\eqref{2.7} with 
			$$\alpha =-50,\ L_1=1,\ L_2=\frac{1}{5},\ L_3=\frac{1}{4},\ L_4=L_5=L_6=2,\ L_7=\frac{1}{2}.$$	
			To examine the asymptotic and global stability of the method proposed in this paper, similarly, by discretizing the differential equation using 2-stage Lobatto \uppercase\expandafter{\romannumeral3}C Runge-Kutta method and approximating the integral terms with the Compound Simpson's rule. Step sizes of \(h = 0.1\) and \(h = 0.01\) are adopted, and  we can verify that the method satisfies the conditions of Theorem \ref{thero1} and \ref{theorem2} with \(\gamma=\frac{4}{3}, \mu=\frac{5}{2}\). Perturbed initial functions are considered with
			\[\begin{cases}
				&\tilde{u}_1(t)=\cos(t)(e^{-t}+0.5),\ \tilde{u}_2(t)=\sin(t)(e^{-t}+0.5),\quad-1\leq t\leq0,\\
				&\tilde{v}_1(t)=e^{-t}(1-t)+0.5,\ \tilde{v}_2(t)=e^{-t}(1+t)+0.5,\quad-1\leq t\leq0.
			\end{cases}
			\]
			 Note that  \(u_i(t) = u(s_i, t)\) and \(v_i(t) = v(s_i, t)\), and 
			$\left\{u_n = \left[ {\begin{array}{*{20}{c}}
					{{u_1(t_n)}}\\
					{{u_2(t_n)}}
			\end{array}} \right],v_n = \left[ {\begin{array}{*{20}{c}}
					{{v_1(t_n)}}\\
					{{v_2(t_n)}}
			\end{array}} \right]\right\}$
		and $\left\{\tilde{u}_n = \left[ {\begin{array}{*{20}{c}}
				{{\tilde{u}_1}(t_n)}\\
				{{\tilde{u}_2}(t_n)}
		\end{array}} \right],\tilde{v}_n = \left[ {\begin{array}{*{20}{c}}
				{{\tilde{v}_1}(t_n)}\\
				{{\tilde{v}_2}(t_n)}
		\end{array}} \right]\right\}$ are the numerical solutions obtained by the initial functions above, respectively.
	Fig.\ref{fig:example2} shows the disturbance errors when \(h = 0.1\) and \(h = 0.01\).
	Similarly, to more clearly illustrate the accuracy of the method, we introduce the following expressions to represent the differences between the two sets of numerical solutions
	\[E(t)=\|u_n-\tilde{u}_n\|_\infty,\quad EA(t)=\|v_n-\tilde{v}_n\|_\infty.\]The numerical results presented in Table 2 more clearly demonstrate the stability and convergence of the method. In summary, we can conclude that under the conditions specified in the text, the method is stable and asymptotically stable.
\begin{table}[htbp]
	\centering
	\caption{When numerical methods are applied to Equations \eqref{6.3}, the differences between the two solutions are \(E(t)\) and \(EA(t)\)}
	\label{tab:example2}
	\renewcommand{\arraystretch}{1.2}
	\begin{tabular}{cccccc}
		\toprule
		&h& $t=0.5$ & $t=1$ & $t=5$ & $t=10$ \\
		\midrule
		\multirow{2}{*}{$E(t)$}
	&0.1& 6.7956e-04 & 3.2452e-04 & 4.0993e-11 & 9.1818e-19 \\
		&0.01& 6.5345e-04 & 2.5050e-04 & 4.3440e-11 & 9.3851e-19 \\
		\midrule
		\multirow{2}{*}{$EA(t)$}
	&0.1	& 1.3382e-01 & 4.6812e-02 & 5.7840e-07 & 5.1553e-13 \\
		&0.01& 1.1259e-01 & 2.5012e-02 & 4.2201e-07 & 3.9258e-13 \\

		\bottomrule
	\end{tabular}
\end{table}
			\begin{figure}[htp]
				\centering
				\includegraphics{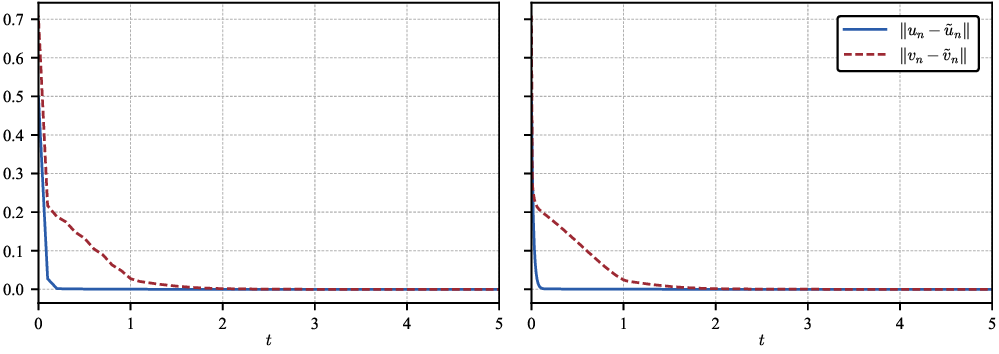}
				\caption{The perturbation errors $\|u_n-\tilde{u}_n\| \ \text{and}\  \|v_n-\tilde{v}_n\|$ with $h= 0.1$ (left) and $h= 0.01$ (right).}
				\label{fig:example2}
			\end{figure}
		\end{example}
		\section{Conclusion}
		In this paper, we  investigated the application of Runge-Kutta methods combined with compound quadrature rules for solving DIDAEs. Stability and asymptotic stability conditions for the exact solutions of DIDAEs were rigorously established. Furthermore, global and asymptotic stability conditions for CQRK methods were derived through a rigorous theoretical analysis. Numerical experiments demonstrated that the stability and asymptotic stability of DIDAEs are well preserved by the CQRK methods. They are  relatively easy to implement and integrates well with standard root-finding solvers. But CQRK methods  require solving nonlinear systems at each step, which may be computationally expensive for large-scale problems. 
		\section*{Acknowledgement}
		This work was supported by the National Natural Science Foundation of China (12271367) and Scientific Research Fund of Hunan Provincial Education Department, PR China (21A0115).
		
		\section*{Data availability}
		Data will be made available on request.

\bibliographystyle{elsarticle-num}
\bibliography{reference}

\begin{thebibliography}{10}
\expandafter\ifx\csname url\endcsname\relax
  \def\url#1{\texttt{#1}}\fi
\expandafter\ifx\csname urlprefix\endcsname\relax\def\urlprefix{URL }\fi
\expandafter\ifx\csname href\endcsname\relax
  \def\href#1#2{#2} \def\path#1{#1}\fi

\bibitem{Niculescu2003}
S.~Niculescu, Delay Effects on Stability: A Robust Control Approach, Springer
  London, London, 2001.

\bibitem{Brunner2004}
H.~Brunner, Collocation method for Volterra integral and related functional
  equations, Cambridge University Press, Cambridge, MA, 2004.

\bibitem{Kolmanovskii2013}
V.~Kolmanovskii, A.~Myshkis, Introduction to the Theory and Applications of
  Functional Differential Equations, Springer Dordrecht, Berlin, 1999.

\bibitem{1}
P.~Nair, R.~Anandaraman, Stochastic runge–kutta methods for multi-dimensional
  ito stochastic differential algebraic equations, Results Appl. Math. 12
  (2021) 100187.
\newblock \href {https://doi.org/10.1016/j.rinam.2021.100187}
  {\path{doi:10.1016/j.rinam.2021.100187}}.

\bibitem{2}
R.~Anandaraman, P.~Nair, Asymptotic mean-square stability of weak second-order
  balanced stochastic runge-kutta methods for multi-dimensional ito stochastic
  differential systems, Appl. Math. Comput. 332 (2018) 276--303.
\newblock \href {https://doi.org/10.1155/2013/679075}
  {\path{doi:10.1155/2013/679075}}.

\bibitem{Wang2014}
W.~Wang, Nonlinear stability of one-leg methods for neutral volterra
  delay-integro-differential equations, Math. Comput. Simul. 97 (2014)
  147--161.
\newblock \href {https://doi.org/10.1016/j.matcom.2013.08.004}
  {\path{doi:10.1016/j.matcom.2013.08.004}}.

\bibitem{Wang2011}
W.~Wang, D.~Li, Stability analysis of runge-kutta methods for nonlinear neutral
  volterra delay-integro-differential equations, Numer. Math.: Theory Methods
  Appl. 4 (2011) 537--561.
\newblock \href {https://doi.org/10.4208/nmtma.2011.m1041}
  {\path{doi:10.4208/nmtma.2011.m1041}}.

\bibitem{Zhao2013}
J.~Zhao, Y.~Xu, G.~Zheng, Stability analysis of extended block boundary value
  methods for linear neutral delay integro-differential equations, Int. J.
  Comput. Math. 90 (2013) 705--726.
\newblock \href {https://doi.org/10.1080/00207160.2012.738812}
  {\path{doi:10.1080/00207160.2012.738812}}.

\bibitem{Zhao2017}
J.~Zhao, Y.~Fan, Y.~Xu, Stability of symmetric runge-kutta methods for neutral
  delay integro-differential equations, SIAM J. Numer. Anal. 55 (2017)
  328--348.
\newblock \href {https://doi.org/10.1137/15m1054146}
  {\path{doi:10.1137/15m1054146}}.

\bibitem{Yu2025}
Y.~Yu, Solving nonlinear neutral delay integro-differential equations via
  general linear methods, J. Comput. Appl. Math. 458 (2025).
\newblock \href {https://doi.org/10.1016/j.cam.2024.116342}
  {\path{doi:10.1016/j.cam.2024.116342}}.

\bibitem{Kucche2019}
K.~Kucche, P.~Shikhare, Ulam stabilities for nonlinear volterra delay
  integro-differential equations, J. Contemp. Math. Anal. 54 (2019) 276--287.
\newblock \href {https://doi.org/10.3103/s1068362319050042}
  {\path{doi:10.3103/s1068362319050042}}.

\bibitem{Amirali2023}
I.~Amirali, H.~Acar, A novel approach for the stability inequalities for
  high-order volterra delay integro-differential equation, J. Appl. Math.
  Comput. 69 (2023) 1057--1069.
\newblock \href {https://doi.org/10.1007/s12190-022-01761-8}
  {\path{doi:10.1007/s12190-022-01761-8}}.

\bibitem{Nowak2023}
G.~Nowak, S.~S. H., A.~Sikorska-Nowak, Asymptotic stability of nonlinear
  neutral delay integro-differential equations, Math. Slovaca 73 (2023)
  103--118.
\newblock \href {https://doi.org/10.1515/ms-2023-0011}
  {\path{doi:10.1515/ms-2023-0011}}.

\bibitem{Mahmoud2024}
A.~Mahmoud, Stability and boundedness of solutions to a certain third-order
  neutral integro-differential equation with constant delay, Boundary Value
  Problems 2024 (2024).
\newblock \href {https://doi.org/10.1186/s13661-024-01939-y}
  {\path{doi:10.1186/s13661-024-01939-y}}.

\bibitem{Yuan2013}
H.~Yuan, C.~Song, Nonlinear stability and convergence of two-step runge-kutta
  methods for volterra delay integro-differential equations, Abstr. Appl. Anal.
  (2013).
\newblock \href {https://doi.org/10.1155/2013/679075}
  {\path{doi:10.1155/2013/679075}}.

\bibitem{HU2011}
P.~Hu, C.~Huang, Analytical and numerical stability of nonlinear neutral delay
  integro-differential equations, J. Franklin Inst. 348 (2011) 1082--1100.
\newblock \href {https://doi.org/10.1016/j.jfranklin.2011.04.007}
  {\path{doi:10.1016/j.jfranklin.2011.04.007}}.

\bibitem{Zhao2014}
J.~Zhao, Y.~Fan, Y.~Xu, Delay-dependent stability analysis of symmetric
  boundary value methods for linear delay integro-differential equations,
  Numer. Algorithms 65 (2014) 125--151.
\newblock \href {https://doi.org/10.1007/s11075-013-9698-7}
  {\path{doi:10.1007/s11075-013-9698-7}}.

\bibitem{Tian2011}
H.~Tian, Q.~Yu, J.~Kuang, Asymptotic stability of linear neutral delay
  differential-algebraic equations and linear multistep methods, SIAM J. Numer.
  Anal. 49 (2011) 608--618.
\newblock \href {https://doi.org/10.1137/10079820x}
  {\path{doi:10.1137/10079820x}}.

\bibitem{LI2011}
Y.~Li, L.~Sun, Q.~Yu, Stability of two-step runge-kutta methods for neutral
  delay differential-algebraic equations, Int. J. Comput. Math. 88 (2011)
  375--383.
\newblock \href {https://doi.org/10.1080/00207160903452228}
  {\path{doi:10.1080/00207160903452228}}.

\bibitem{Tian2014}
H.~Tian, Q.~Yu, J.~Kuang, Asymptotic stability of linear neutral delay
  differential - algebraic equations and runge-kutta methods, SIAM J. Numer.
  Anal. 52 (2014) 68--82.
\newblock \href {https://doi.org/10.1137/110847093}
  {\path{doi:10.1137/110847093}}.

\bibitem{Zhang2010BBV}
C.~Zhang, H.~Chen, Asymptotic stability of block boundary value methods for
  delay differential-algebraic equations, Math. Comput. Simul. 81 (2010)
  100--108.
\newblock \href {https://doi.org/10.1016/j.matcom.2010.07.012}
  {\path{doi:10.1016/j.matcom.2010.07.012}}.

\bibitem{Zhang2021}
C.~Zhang, X.~Yan, Convergence and stability of extended bbvms for nonlinear
  delay-differential-algebraic equations with piecewise continuous arguments,
  Numer. Algorithms 87 (2021) 921--937.
\newblock \href {https://doi.org/10.1007/s11075-020-00993-8}
  {\path{doi:10.1007/s11075-020-00993-8}}.

\bibitem{Thuan2024}
D.~Thuan, N.~Son, Stability of stochastic differential-algebraic equations with
  delay, Int. J. Syst. Sci. 55 (2024) 1835--1850.
\newblock \href {https://doi.org/10.1080/00207721.2024.2321378}
  {\path{doi:10.1080/00207721.2024.2321378}}.

\bibitem{Sawoor2020}
A.~Sawoor, M.~Sadkane, Lyapunov-based stability of delayed linear differential
  algebraic systems, Appl. Math. Lett. 118 (2021).
\newblock \href {https://doi.org/10.1016/j.aml.2021.107185}
  {\path{doi:10.1016/j.aml.2021.107185}}.

\bibitem{Yu2015}
Q.~Yu, H.~Tian, J.~Kuang, Asymptotic stability of general linear methods for
  systems of linear neutral delay differential-algebraic equations,
  International Journal of Computer Mathematics 92 (2015) 816--835.
\newblock \href {https://doi.org/10.1080/00207160.2014.914178}
  {\path{doi:10.1080/00207160.2014.914178}}.

\bibitem{Sun2014}
L.~Sun, Y.~Cong, J.~Kuang, Asymptotic behavior of nonlinear delay
  differential–algebraic equations and implicit euler methods, Appl. Math.
  Comput. 228 (2014) 395--403.
\newblock \href {https://doi.org/10.1016/j.amc.2013.11.098}
  {\path{doi:10.1016/j.amc.2013.11.098}}.

\bibitem{Yuan2014}
H.~Yuan, J.~Shen, The stability of two-step runge-kutta methods for neutral
  delay integro differential-algebraic equations with many delays, Math. Probl.
  Eng. 2014 (2014).
\newblock \href {https://doi.org/10.1155/2014/918371}
  {\path{doi:10.1155/2014/918371}}.

\bibitem{liu2022}
H.~Liu, H.~Li, Y.~Zhang, S.~Li, Asymptotic stability of runge-kutta method for
  solving nonlinear functional differential-algebraic equations, Applied
  Numerical Mathematics 181 (2022) 277--292.
\newblock \href {https://doi.org/10.1016/j.apnum.2022.06.007}
  {\path{doi:10.1016/j.apnum.2022.06.007}}.

\bibitem{Yan2020}
X.~Yan, C.~Zhang, Numerical approximation to a class of nonlinear hybrid system
  with distributed delay via block boundary value methods, Journal of
  Computational and Applied Mathematics 378 (2020) 112942.
\newblock \href {https://doi.org/10.1016/j.cam.2020.112942}
  {\path{doi:10.1016/j.cam.2020.112942}}.

\bibitem{Wanghalanay}
W.~Wang, A generalized halanay inequality for stability of nonlinear neutral
  functional differential equations, Journal of Inequalities and Applications
  2010 (2010) 475019.
\newblock \href {https://doi.org/10.1155/2010/475019}
  {\path{doi:10.1155/2010/475019}}.

\bibitem{Burrage1980}
K.Burrage, J.~Butcher, Non-linear stability of a general class of differential
  equation methods, BIT Numerical Mathematics Name 20 (1980) 185--203.
\newblock \href {https://doi.org/10.1007/BF01933191}
  {\path{doi:10.1007/BF01933191}}.

\bibitem{lemma2}
C.~Zhang, S.~Vandewalle, Stability analysis of runge–kutta methods for
  nonlinear volterra delay‐integro‐differential equations, IMA Journal of
  Numerical Analysis 24~(2) (2004) 193--214.
\newblock \href {https://doi.org/10.1093/imanum/24.2.193}
  {\path{doi:10.1093/imanum/24.2.193}}.

\end{thebibliography}
	\end{document}